
\documentclass[11pt]{amsart}
\usepackage{color}
\usepackage{enumerate}

\newtheorem{teo}{Theorem}[section]

\newtheorem{prop}[teo]{Proposition}
\newtheorem{cor}[teo]{Corollary}

\theoremstyle{definition}
\newtheorem{definiz}[teo]{Definition}
\newtheorem{example}[teo]{Example}
\newtheorem{remark}[teo]{Remark}

\newcommand{\R}{\mathbb R}

\newcommand{\Z}{\mathbb Z}
\newcommand{\C}{\mathbb C}

\newcommand{\So}{\mathcal{SR}}

\newcommand{\SF}{\mathbb S}

\DeclareMathOperator{\pv}{\wedge \mspace{-9.5mu}_* \ }

\newcommand{\N}{\mathbb N}

\newcommand{\HH}{\mathbb H}

\theoremstyle{remark} 

\numberwithin{equation}{section}

\setlength{\textheight}{20cm} \textwidth16cm \hoffset=-2truecm

\begin{document}

\title[$*$-exponential of slice-regular functions]
{$*$-exponential of slice-regular functions}

\author[A. Altavilla]{A. Altavilla${}^{\dagger,\ddagger}$}\address{Altavilla Amedeo: Dipartimento Di Matematica, Universit\`a di Roma "Tor Vergata", Via Della Ricerca Scientifica 1, 00133, Roma, Italy}\email{altavilla@mat.uniroma2.it}

\author[C. de Fabritiis]{C. de Fabritiis${}^{\dagger}$}\address{Chiara de Fabritiis: Dipartimento di Ingegneria Industriale e Scienze
Matematiche, Universit\`a Politecnica delle Marche, Via Brecce Bianche, 60131,
Ancona, Italy}\email{ fabritiis@dipmat.univpm.it}
\thanks{${}^{\dagger}$GNSAGA of INdAM, ${}^{\ddagger}$FIRB 2012 {\sl Geometria differenziale e teoria geometrica delle funzioni}, SIR grant {\sl ``NEWHOLITE - New methods in holomorphic iteration''} n. RBSI14CFME and SIR grant {\sl AnHyC - Analytic aspects in complex and hypercomplex geometry} n. RBSI14DYEB.}

\date{\today }

\subjclass[2010]{Primary 30G35; secondary 30C15, 32A30, 47A60}
\keywords{Slice-regular functions, quaternionic exponential, $*$-product of slice-regular functions}

\begin{abstract} 
According to~\cite{C-S-St-2} we define the $*$-exponential of a slice-regular function, which can be seen as a generalization of the complex exponential to quaternions.   
Explicit formulas for $\exp_*(f)$ are provided, also in terms of suitable sine and cosine functions. 
We completely classify under which conditions the $*$-exponential of a function is either slice-preserving or $\C_J$-preserving for some $J\in\SF$ and show that $\exp_*(f)$ is never-vanishing. 
Sharp necessary and sufficient conditions are given in order that $\exp_*(f+g)=\exp_*(f)*\exp_*(g)$, finding an exceptional and unexpected case in which equality holds even if $f$ and $g$ do not commute.
We also discuss the existence of a square root of a slice-preserving regular function, characterizing slice-preserving functions (defined on the circularization of simply connected domains) which admit square roots. 
Square roots of this kind of functions are used to provide a further formula for $\exp_{*}(f)$.
A number of examples is given throughout the paper.
\end{abstract}
\maketitle

\section{Introduction}

In classical complex analysis the exponential map has a tremendous role in the study of growth of holomorphic functions, differential equations and uniformization theorem. In this paper we investigate the behaviour of a quaternionic  analogous of the complex exponential map. 
In particular, special features of the exponential in the complex case include the fact that it never vanishes, the expression of the exponential of a pure imaginary in terms of sine and cosine, and the formula which gives the exponential of a sum as the product of the exponentials of the two summands.
These properties will be the object of our study in the setting of slice-regularity.

We briefly introduce quaternions and quaternionic slice-regular functions. 
Given an alternating triple $i,j,k$ with $i^2=j^2=k^2=-1$ and $k=ij=-ji$, we denote by $\HH$ the real algebra of quaternions 
$$\HH=\{q=q_0+q_1i+q_2j+q_3k\ :\ q_0,q_1,q_2,q_3\in \R\}.$$ The conjugation on $\HH$ is given by $q^c=q_0-(q_1i+q_2j+q_3k)$ and
we sometimes write ${\rm Re} (q)=q_0=\frac12(q+q^c)$ and 
$\vec q=q- {\rm Re} (q)$, so that $q=q_0+\vec q$; the quaternion $\vec q$ is often denoted by ${\rm Im}(q)$.
 
The following subsets of $\HH$ have special interest:
\begin{align*}
\SF&=\{q\in\HH\ : \ q^2=-1\}=\{q_1i+q_2j+q_3k\ : \ q_1^2+q_2^2+q_3^2=1\}\\
{\rm Im}(\HH)&=\{q\in\HH: {\rm Re} (q)=0\}=\bigcup_{I\in \SF} \, \R I;
\end{align*}
the first can be identified with the standard sphere of $\R^{3}\simeq {\rm Span}_{\R}(i,j,k)$ and the second with $\R^3\simeq {\rm Span}_{\R}(i,j,k)$. For any $I\in\SF$ we set  $\C_I=
{\rm Span}_{\R}(1,I)$. 

Given any set $D\subset \C=\{\alpha+\imath \beta\,|\,\alpha,\beta\in\R\}$, we define its \textit{circularization} as 
$$
\Omega_D=\{\alpha+\beta I\in\HH\, | \, \alpha+\imath \beta\in D, I\in\SF\}\subseteq \HH.
$$
Subsets of $\HH$ of this form will be called \textit{circular sets}. If $D$ is a singleton $\{q\}$ then we also denote the sphere $\Omega_{\{q\}}$ with real center ${\rm Re}(q)$ by $\SF_{q}$. Notice that, if $D$ is open in $\C$, then $\Omega_{D}$ is open in $\HH$. Here we assume that $\HH\simeq \R^{4}$ is equipped with the Euclidean topology. In order to simplify the notation we will often drop the subscript $D$ writing $\Omega$ in place of $\Omega_{D}$.

The following definitions identify the functions we will work with (see~\cite{G-P})

\begin{definiz}
Let $D\subset \C$ be invariant under conjugation.  
A  function $F:D\to\HH\otimes_{\R} \C$ is a {\sl stem} function if $F(\overline z)=\overline{F(z)}$ where $\overline{p+\imath q}=p-\imath q$ for any $p+\imath q\in\HH\otimes_{\R} \C$.
A {\sl slice} function $f:\Omega_D\to \HH$ is a function induced by a stem function $F=F_1+\imath F_2:D\to\HH\otimes_{\R} \C$ in the following way:
$f(\alpha+\beta I)=F_1(\alpha+\imath \beta)+I F_2(\alpha+\imath \beta)$. Such a function will also be denoted by $f=\mathcal I(F)$.
\end{definiz}

Notice that a slice function $f$ is induced by a unique stem function $F = F_{1} + \imath F_{2}$, given by 
$F_{1}(\alpha+\imath \beta) = \frac{1}{2}(f(\alpha+\beta I)+f(\alpha-\beta I))$ and $F_{2}(\alpha+\imath \beta) = -\frac{1}{2}I(f(\alpha+\beta I)-f(\alpha-\beta I))$ for
any $I \in \SF$.

\begin{definiz}
Let $\Omega\subset \HH$ be a circular domain, that is a circular connected open subset of $\HH$.
A {\sl slice} function $f=\mathcal I(F):\Omega\to \HH$ is slice {\sl regular} if $F$ is holomorphic with respect to the natural complex structures of $\C$ and $\HH\otimes_{\R} \C$. We denote by $\So(\Omega)$ the set of all slice-regular functions on $\Omega$.
\end{definiz}
Notice that $\So(\Omega)$ has a natural structure of right $\HH$-module.\\
In what follows we always assume $\Omega$ is a (non-empty) circular domain of $\HH$.
If $\Omega$ contains real points, a function is slice-regular if and only if it is regular in the sense of Cullen (see~\cite{G-S-St}). A useful result for slice-regular functions is the following

\begin{prop}[Representation formula] Let $f\in\So(\Omega)$ and let $\alpha+\beta J\in\Omega$. For all $I\in\SF$ we have 
$$
f(\alpha+\beta J)=\frac{1-JI}2f(\alpha+\beta I)+\frac{1+JI}2f(\alpha-\beta I). 
$$
\end{prop}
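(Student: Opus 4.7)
The plan is to prove the representation formula as a direct algebraic manipulation from the definitions, using the stem function symmetry to convert $f(\alpha-\beta I)$ into an expression involving $F_1,F_2$ at the single point $\alpha+\imath\beta$, and then combining with $I^2=-1$. Note that slice-regularity itself plays no role here: the identity is really a statement about slice functions induced by a stem function, and holomorphy of $F$ is irrelevant for it.

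First I would unpack the left-hand side by the definition of a slice function, writing
$$f(\alpha+\beta J)=F_1(\alpha+\imath\beta)+JF_2(\alpha+\imath\beta).$$
For the two terms on the right-hand side, the value $f(\alpha+\beta I)=F_1(\alpha+\imath\beta)+IF_2(\alpha+\imath\beta)$ is immediate. For $f(\alpha-\beta I)$ I would rewrite the argument as $\alpha+(-\beta)I$, so the definition yields $F_1(\alpha-\imath\beta)+IF_2(\alpha-\imath\beta)$; the stem condition $F(\bar z)=\overline{F(z)}$ forces $F_1$ to be even and $F_2$ odd in the imaginary part, so this collapses to $F_1(\alpha+\imath\beta)-IF_2(\alpha+\imath\beta)$.

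It then remains to substitute these expressions into $\tfrac{1-JI}{2}f(\alpha+\beta I)+\tfrac{1+JI}{2}f(\alpha-\beta I)$ and simplify. The coefficient of $F_1(\alpha+\imath\beta)$ is $\tfrac{1-JI}{2}+\tfrac{1+JI}{2}=1$, and the coefficient of $F_2(\alpha+\imath\beta)$ is $\tfrac{(1-JI)I-(1+JI)I}{2}=\tfrac{-2JI^2}{2}=J$, using $I^2=-1$. This reproduces $F_1(\alpha+\imath\beta)+JF_2(\alpha+\imath\beta)$, matching the left-hand side.

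There is no real obstacle in this argument; it is essentially a one-line computation. The only delicate points are keeping the order of quaternionic multiplication correct (so that $I$ stays on the left of $F_2$ when expanding $(1\pm JI)I$) and handling the degenerate case $\beta=0$, where $\alpha+\beta J=\alpha$ is real, $F_2(\alpha)=0$ by the stem condition at a fixed point of conjugation, and both sides reduce to $F_1(\alpha)$. Also, conjugation-invariance of $D$ guarantees that $\alpha\pm\imath\beta\in D$ whenever $\alpha+\imath\beta\in D$, so the expressions $f(\alpha\pm\beta I)$ are well-defined.
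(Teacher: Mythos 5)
Your argument is correct: unpacking both sides via the slice-function definition, using the stem symmetry $F_1(\bar z)=F_1(z)$, $F_2(\bar z)=-F_2(z)$ to rewrite $f(\alpha-\beta I)$, and collecting the left coefficients of $F_1$ and $F_2$ with $I^2=-1$ is exactly the standard verification, and you correctly note that only sliceness (not regularity) is used. The paper itself states this proposition without proof, quoting it from the literature, so there is nothing to compare against; your computation matches the usual one found in the cited references.
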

Therefore if
$f_I:\Omega_I=\Omega\cap \C_I\to \HH$  is a holomorphic function with respect to the left multiplication by $I$, then there exists a unique slice-regular function $g:\Omega\to\HH$ such that $f_I=g|_{\Omega_I}$. Such a function will be called the {\sl regular extension of $f_I$}, (see~\cite{G-S-St}, p 9).

In general the pointwise product of two slice-regular functions is no more slice. Nonetheless, this problem can be overcome by defining the following non-commutative product (see~\cite{C-G-S-St} also).
\begin{definiz}\label{prodotto-stella}
Let $f=\mathcal I(F)$ and $g=\mathcal I(G)$ be two slice functions on $\Omega$. We denote by $f*g$ their {\sl $*$-product} defined by $f*g=\mathcal I(FG)$ where $FG$ is the pointwise product with values in $\HH\otimes_{\R} \C$, {\sl i.e.} 
$(p+\imath q)(p'+\imath q')=pp'-qq'+\imath (pq'+qp')$.
\end{definiz}

In $\So(\Omega)$ we can also define the {\sl conjugate function} of $f$ that is 
$f^c=\mathcal I(F_1^c+\imath F_2^c)$ if $f=\mathcal I(F_1+\imath F_2)$ and the functions $F_1^c$ and $F_2^c$ are obtained from $F_1$ and $F_2$ by pointwise conjugation in $\HH$.
Given $f\in \So(\Omega)$ its {\sl symmetrized function} $f^s$ is given by $f^s=f^c*f=f*f^c$.

In~\cite{G-P} the $*$-product $f*g$ and the symmetrized function $f^{s}$ of $f$ are called slice product and normal function of $f$, and are denoted by $f\cdot g$ and $N(f)$, respectively.

It is known that if $\alpha+\beta I$ is such that $f(\alpha+\beta I)=0$, then any point in the set $\SF_{\alpha+\beta I}=\{\alpha+\beta J\,|\,  J\in\SF\} $ is a zero for $f^s$. In particular if $f^s$ is never-vanishing then also $f$ is. Moreover, the zero set of a slice-regular function $f\not\equiv0$ is closed with empty interior and, if $f^s\not\equiv0$, it is a union of isolated points and isolated spheres of the form  $\SF_{\alpha+\beta J}$ for suitable $\alpha,\beta\in\R$  (see~\cite{G-P, G-S}).

We now introduce two special classes of slice-regular functions. 
A function $f=\mathcal I(F_1+\imath F_2)\in\So(\Omega)$ is {\sl slice-preserving} if both $F_1$ and $F_2$ are real-valued. The class of such functions will be denoted by  $\So_\R(\Omega)$
(they are called $\HH$-intrinsic in \cite{G-M-P} and quaternionic-intrinsic in \cite{C-S-St-2}).
We remark that for any $f\in\So(\Omega)$ its symmetrized function $f^s$ belongs to $\So_\R(\Omega)$.
 
Given $J\in\SF$, a function $f=\mathcal I(F_1+\imath F_2)\in\So(\Omega)$ is said to be
  {\sl $\C_J$-preserving} if both $F_1$ and $F_2$ are $\C_J$-valued;  the class of such functions will be denoted by $\So_J(\Omega)$ (see~\cite{G-M-P}).

For slice-preserving and $\C_J$-preserving functions, the $*$-product has special features:
\begin{itemize}
\item for any $f\in\So_\R(\Omega)$ and $g\in\So(\Omega)$ we have $f*g=g*f=fg$ (that is, the $*$-product $f*g$ coincides with the pointwise product $fg$);
\item choosen $J\in\SF$, for any $f,g\in\So_J(\Omega)$ we have $f*g=g*f$ (and $f*g$ coincides with the regular extension of the pointwise product of $f_J$ and $g_J$);
\item if $\rho_1,\rho_2\in \So_\R(\Omega)$ and $a_1,a_2\in\HH$ then $(\rho_1a_1)*(\rho_2a_2)=\rho_1\rho_2a_1a_2$.
\end{itemize}

We have now the main tools to define the $*$-exponential of a slice-regular function.
The first definition is the natural extension of the complex exponential to $\HH$ and indeed its restriction to any complex line $\C_I$, $I\in\SF$, coincides with the complex exponential. 

The function $\exp:\HH\to\HH$ is given by 
$\exp q=e^q=\sum_{n\in\N}\frac{q^n}{n!}$. Trivially $\exp\in\So_\R(\HH)$. 

Given $f\in\So(\Omega)$ the composition $\exp\circ f$ is not always slice-regular. In~\cite{C-S-St-2}, Colombo, Sabadini and Struppa gave the following definition which coincides with $\exp\circ f$ if $f\in\So_\R(\Omega)$ (see also~\cite{R-W} and~\cite{V}, where several different regular compositions are introduced). 
\begin{definiz} If $f\in\So(\Omega)$ the $*$-exponential of $f$ is defined as  
$\exp_*(f)=\sum_{n\in\N}\frac{f^{*n}}{n!}$.
\end{definiz}

Let $f\in\So(\Omega)$ and let $n$ be a positive integer. Notice that, given any (non-empty) circular compact subset $K$ of $\Omega$, $\max_{q\in K} |f^{*n}(q)|\leq\max_{q\in K}|f^{n}(q)|$. This follows easily by induction on $n$ observing that $f^{*n}(q) = 0$ if $f(q) = 0$ and $|f^{*n}(q)| = |f(q)| \cdot |f^{*(n-1)}(f(q)^{-1}qf(q))|$
if $f(q)\neq 0$ (see~\cite[Theorem 3.4]{G-S-St}). Thanks to these estimates, the above series converges uniformly on compact subsets of $\Omega$ and hence $\exp_{*}(f)$ is a well defined function in $\So(\Omega)$.

The function $\exp_{*}(f)$ will be the main object of our study, which will be organized as follows.

Section 2 contains a new interpretation of the $*$-product given by Formula~(\ref{star-formula}). This result allows us to give a necessary and sufficient condition for the commutation of two functions with respect to this product. 
We will exploit Formula~(\ref{star-formula}) extensively in Section 4, where it will be a new, useful tool to simplify calculations.

In Section 3 we investigate on the possibility of finding a square root of a slice-preserving regular function in the space $\So_\R(\Omega)$, when each connected component of $\Omega_I=\C_I\cap \Omega$ is simply connected, obtaining a complete answer to this question. Notice that, since $\Omega$ is connected by assumption, the set $\Omega_I$ has at most two connected components. 

A part of the necessary and sufficient condition in order that a function in $\So_\R(\Omega)\setminus\{0\}$ has a square root in $\So_\R(\Omega)$ is given in terms of the spherical and isolated multiplicities of its zeroes (see~\cite{G-S-St}, Paragraph 3.6, and Definition 3.37 in particular).
The result is the following.
\begin{prop}
Suppose each connected component of $\Omega_I$ is simply connected for some (and hence any) $I\in\SF$.
Given $h\in \So_\R(\Omega)\setminus\{0\}$ there exists $f\in \So_\R(\Omega)$ such that $f^2=h$ iff
\begin{enumerate}[(i)]
\item the zero set of $h$ consists of real zeroes of even isolated multiplicity and isolated spheres $\SF_{q}$ with spherical multiplicity multiple of $4$;
\item if $\Omega\cap \R\neq\emptyset$ then $h(\Omega\cap \R)\subseteq [0,+\infty)$.
\end{enumerate}
\end{prop}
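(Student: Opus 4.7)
The plan is to reduce the question to complex analysis on a single slice. For a fixed $I\in\SF$, the restriction $h_I:=h|_{\Omega_I}$ is a $\C_I$-valued holomorphic function on $\Omega_I$ satisfying the real-symmetry $h_I(\overline{w})=\overline{h_I(w)}$ under conjugation in $\C_I$. Conversely, any real-symmetric, $\C_I$-valued holomorphic function on $\Omega_I$ extends via the Representation Formula to a unique element of $\So_\R(\Omega)$. Since $f$ is slice-preserving, $f*f$ coincides with the pointwise square $f^2$, whose restriction to $\Omega_I$ is $f_I^2$; thus finding $f\in\So_\R(\Omega)$ with $f^2=h$ is equivalent to finding a real-symmetric, $\C_I$-valued holomorphic $f_I$ on $\Omega_I$ with $f_I^2=h_I$.

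For necessity, assume $f^2=h$ with $f\in\So_\R(\Omega)$. At any real $\alpha\in\Omega\cap\R$ one has $f(\alpha)\in\R$, so $h(\alpha)=f(\alpha)^2\geq0$, giving (ii). The identity $h_I=f_I^2$ forces every zero of $h_I$ to have even order. For real zeros this says real isolated zeros of $h$ have even multiplicity. A zero sphere $\SF_{q_0}$ of $h$, with $q_0=\alpha_0+\beta_0 I$ and $\beta_0\neq0$, meets $\C_I$ in the conjugate pair $\alpha_0\pm\beta_0 I$; by real-symmetry the orders of vanishing of $h_I$ at these two points coincide, and since $h_I=f_I^2$ this common order is $2m$ for some $m\geq1$. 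The spherical multiplicity of the slice-preserving function $h$ at $\SF_{q_0}$ then equals $4m$, a multiple of $4$, giving (i).

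For sufficiency, assume (i) and (ii). By (i), every zero of $h_I$ has even order. A standard complex-analytic result ensures that on a simply connected planar domain, any holomorphic function whose zeros all have even order admits a holomorphic square root: one constructs (by a Weierstrass product) a holomorphic $g$ on the domain with the same zero set as $h_I$ but with half the orders, and then extracts a holomorphic square root of the non-vanishing quotient $h_I/g^2$ through its logarithm. Hence on each simply connected component of $\Omega_I$ there exists a $\C_I$-valued holomorphic $f_I$ with $f_I^2=h_I$.

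It remains to arrange real-symmetry. If $\Omega\cap\R\neq\emptyset$ then $\Omega_I$ is connected, and condition (ii) gives $f_I(\alpha)^2=h(\alpha)\geq0$ for every $\alpha\in\Omega\cap\R$, which forces $f_I(\alpha)\in\R$; the holomorphic function $w\mapsto\overline{f_I(\overline{w})}$ is a second square root of $h_I$ agreeing with $f_I$ on $\Omega\cap\R$, so by the identity principle the two coincide on $\Omega_I$ and $f_I$ is real-symmetric. If instead $\Omega\cap\R=\emptyset$, then $\Omega_I$ splits into two simply connected components exchanged by conjugation; choose $f_I$ freely on one component and define it on the other by $f_I(w):=\overline{f_I(\overline{w})}$. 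In either case the regular extension of $f_I$ is the desired $f\in\So_\R(\Omega)$. The crux lies in reconciling the sign ambiguity of the holomorphic square root with the global real-symmetry requirement: condition (ii) pins down the correct choice when $\Omega$ meets the real axis, while the conjugation trick handles the complementary situation.
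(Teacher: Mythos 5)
Your proof is correct, and its skeleton (restrict to a slice $\Omega_I$, build a holomorphic square root of $h_I$, enforce symmetry under conjugation, then take the regular extension) is the same as the paper's; the difference lies in the two technical steps. For the existence of the square root the paper exhausts $\Omega_I$ by nested, relatively compact, simply connected symmetric subdomains $\Omega_n$, factors out the finitely many zeroes in each $\Omega_n$ by a real polynomial $P_n$, takes a square root of the non-vanishing remainder, and glues the pieces by a uniqueness argument; you instead remove all zeroes at once with a Weierstrass product and take a logarithm of the resulting non-vanishing function on the simply connected domain, which avoids the exhaustion and gluing entirely. For the symmetry, the paper builds it into the construction by choosing each local root to be positive at a real basepoint $q_0$ and using (ii) to see that the factor $\gamma_n$ is real on $\Omega_n\cap\R$; you obtain it a posteriori by observing that (ii) forces \emph{any} holomorphic square root $f_I$ to be real-valued on $\Omega\cap\R$ (both square roots of a non-negative real are real) and then comparing $f_I(w)$ with $\overline{f_I(\overline w)}$ via the identity principle, which is arguably cleaner. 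In the case $\Omega\cap\R=\emptyset$ both arguments use Schwarz reflection across the two components. Two small points you leave implicit and should state: that the zero set of a function in $\So_\R(\Omega)\setminus\{0\}$ automatically consists only of real isolated points and isolated spheres with real center (this is part of what condition (i) asserts, and is needed before discussing multiplicities), and that $h_I$ is not identically zero on either component of $\Omega_I$ when $\Omega\cap\R=\emptyset$ (which follows from the representation formula), so that its zeroes are indeed isolated there.
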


In particular we apply this result when $h=g^s$ for $g\in\So(\Omega)$.

Last section is devoted to the study of the $*$-exponential of a slice-regular function.
We first give a sufficient condition, namely commutativity of the $*$-product of $f$ and $g$, for the equality
$\exp_*(f+g)=\exp_*(f)*\exp_*(g)$ to hold. Given $f=f_0+f_v$ (see Definition~\ref{vector} and subsequent remark for the notation), we then find an explicit expression for $\exp_*(f)$ in terms of $f_0$ and $f_v$.
In Corollaries~\ref{sine-cosine} and~\ref{true-sine} 
this allows us to rewrite $\exp_*(f)$ in terms of suitable sine and cosine functions. 
Moreover we completely classify under which conditions the $*$-exponential of a function is either in $\So_\R(\Omega)$ or in $\So_J(\Omega)$ for some $J\in\SF$.

In Proposition~\ref{real-part} we compute the symmetrized function of the $*$-exponential of $f$ in terms of the real part of $f$, proving as a consequence that $\exp_*(f)$ is never-vanishing. 
Finally, in Theorem~\ref{super-equality} we give necessary and sufficient conditions on $f$ and $g$ in order that $$\exp_*(f+g)=\exp_*(f)*\exp_*(g).$$ 
Quite surprisingly we are able to find a large bunch of slice regular functions which do not commute under the $*$-product and for which nonetheless the above equality holds.
This last result is followed by four examples which illustrate the sharpness of the required conditions.

The authors warmly thank the anonymous referee for many suggestions which helped to improved the quality of the paper.

\section{regular product: a further interpretation}

The following result, which is due to Colombo, Gonzales-Cervantes and Sabadini (see~\cite{C-GC-S}, Proposition~3.12), gives a way to decompose a given slice-regular function by means of four slice-preserving regular functions (see also \cite{G-M-P}, Lemma 6.11).

\begin{prop}\label{GMP} 
Let $\{1,I,J,K\}$ be a vector basis of $\HH$. Then the map 
$$
\left(\So_\R(\Omega)\right)^4\ni (f_0,f_1,f_2,f_3)\mapsto f_0+f_1I+f_2J+f_3K\in \So(\Omega)
$$
is bijective. In particular it follows that given any $f\in \So(\Omega)$ there exist and are unique $f_0,f_1,f_2,f_3\in\So_\R(\Omega)$ such that 
$$
f=f_0+f_1I+f_2J+f_3K.
$$
Moreover if $I\in\SF$ then $f\in \So_I(\Omega)$ iff $f_2\equiv f_3\equiv 0$. 
\end{prop}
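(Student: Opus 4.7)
The plan is to prove the proposition by passing to the stem functions and using the $\R$-linear direct sum decomposition $\HH = \R \oplus \R I \oplus \R J \oplus \R K$, which lifts to $\HH \otimes_\R \C$ componentwise.

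First I would set up the decomposition. Fix the basis $\{1,I,J,K\}$ of $\HH$ and write any element of $\HH\otimes_\R\C$ uniquely as $\sum_{l=0}^{3}(\alpha_l+\imath\beta_l)I_l$ with $\alpha_l,\beta_l\in\R$, where $I_0=1,I_1=I,I_2=J,I_3=K$. Given $f=\mathcal I(F)\in\So(\Omega)$, write $F=\sum_{l=0}^{3}F^{(l)}I_l$ with $F^{(l)}:D\to\C$. I claim each $F^{(l)}$ is itself a stem function on $D$ in the sense that $F^{(l)}(\bar z)=\overline{F^{(l)}(z)}$: this is immediate by comparing coefficients in the basis $\{1,I,J,K\}$ in the identity $F(\bar z)=\overline{F(z)}$, since the bar on $\HH\otimes_\R\C$ acts only on the $\C$-factor. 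Similarly, since $F$ is holomorphic with respect to the natural complex structure of $\HH\otimes_\R\C$, which is just componentwise multiplication by $\imath$ on each $\C I_l$-summand, each scalar $F^{(l)}$ is holomorphic on $D$.

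Next I set $f_l:=\mathcal I(F^{(l)})$. By construction each $F^{(l)}$ takes values in $\R\otimes_\R\C\subset\HH\otimes_\R\C$, so both the real and imaginary parts of the stem $F^{(l)}$ are real-valued, meaning $f_l\in\So_\R(\Omega)$. Since $f_l$ is slice-preserving, pointwise right-multiplication by the constant $I_l\in\HH$ coincides with the $*$-product by the corresponding constant slice function, and on stem functions this gives $(f_l I_l)=\mathcal I(F^{(l)}I_l)$. Summing over $l$, one sees $f=f_0+f_1I+f_2J+f_3K$, so the map is surjective.

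For injectivity, assume $f_0+f_1I+f_2J+f_3K=0$ with each $f_l\in\So_\R(\Omega)$ and let $G^{(l)}:D\to\R\otimes_\R\C$ be the stem function of $f_l$. By uniqueness of the stem function inducing a given slice function (noted just after the definition of slice function), the vanishing of the sum forces $\sum_{l=0}^{3}G^{(l)}I_l\equiv 0$ as a stem function on $D$; but evaluating at any $z\in D$ and using the uniqueness of the $\{1,I,J,K\}$-decomposition in $\HH\otimes_\R\C$ gives $G^{(l)}\equiv 0$ for every $l$, hence $f_l\equiv 0$. This proves bijectivity.

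Finally, for the characterization of $\C_I$-preserving functions, observe that $\C_I=\R\oplus\R I$, so $F_1,F_2$ take values in $\C_I$ if and only if their components on $J$ and $K$ vanish. Under the decomposition $F=\sum_l F^{(l)}I_l$, and writing $F^{(l)}=F^{(l)}_1+\imath F^{(l)}_2$, this is equivalent to $F^{(2)}_1\equiv F^{(2)}_2\equiv F^{(3)}_1\equiv F^{(3)}_2\equiv 0$, i.e.\ to $F^{(2)}\equiv F^{(3)}\equiv 0$, i.e.\ to $f_2\equiv f_3\equiv 0$. The only subtle point in the whole argument, and hence the main thing to be careful about, is the bookkeeping between the two distinct roles of $\C$ here (the tensor factor in $\HH\otimes_\R\C$ versus the subfield $\C_I\subset\HH$) and the compatibility of the conjugation and the holomorphy condition with the coordinate splitting; once that is sorted out the rest is pure linear algebra applied pointwise on $D$.
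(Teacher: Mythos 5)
Your proof is correct. Note that the paper offers no proof of this proposition at all: it is quoted as a known result of Colombo--Gonz\'alez-Cervantes--Sabadini (Proposition 3.12 of \cite{C-GC-S}, see also Lemma 6.11 of \cite{G-M-P}), so there is no in-paper argument to compare against. Your route --- decomposing the stem function as $F=\sum_{l}F^{(l)}I_l$ along the real basis $\{1,I,J,K\}$ and checking that the reality condition $F(\bar z)=\overline{F(z)}$, the holomorphy, and the inducing map $\mathcal I$ are all compatible with this componentwise splitting --- is the natural one and is essentially the argument of the cited sources. The two points that actually need care are both handled correctly: that $\mathcal I(F^{(l)}I_l)$ equals the pointwise product $f_l I_l$ (which uses that $F^{(l)}_2$ is real-valued, so it slides past the left factor $J'$ in $f_l(\alpha+\beta J')=F^{(l)}_1+J'F^{(l)}_2$), and that injectivity reduces, via the uniqueness of the inducing stem function, to the linear independence of $1,I,J,K$ over the tensor factor $\C$ in $\HH\otimes_\R\C$.
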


\begin{remark}
If $I,J,K\in\SF$, then $(f_0,f_1,f_2,f_3)$ is the $4$-tuple associated to $f$  if and only if $(f_0,-f_1,-f_2,-f_3)$ is the $4$-tuple associated to $f^c$; in particular $f_0=\frac{f+f^c}2$ does not depend on the chosen 
vector basis $\{1,I,J,K\}$ of $\HH$, provided $I,J,K\in\SF$.

\end{remark}

\begin{definiz}\label{vector} 
Given $f\in\So(\Omega)$, we define the slice-regular function $f_v$  on $\Omega$ by
$f_v=\frac{f-f^c}2$.
\end{definiz}

\begin{remark}
Trivially $f_0+f_v=f$ and $f_0-f_v=f^c$. 
According to the notation of Proposition~\ref{GMP} we have $f_v=f_1I+f_2J+f_3K$. 
\end{remark}

The reason we choose the underscore $v$ for the above function is that, in a certain sense, it represents the ``vector'' part of the quaternionic-valued function $f$, according to the splitting $q=q_0+q_1i+q_2j+q_3k=q_0+\vec{q}$. 

The above result and definition allow us to describe the $*$-product of two slice-regular functions in terms of intrinsic operators.

Let now $I,J,K$ be an orthonormal basis of ${\rm Im}(\HH)$ with $IJ=K$.

\begin{definiz} 
Given $f,g\in\So(\Omega)$, we denote by $f\pv g$ 
and $\langle f, g\rangle_*$ the slice-regular functions given by 
$$
(f\pv  g)(q)=\frac{(f*g)(q)-(g*f)(q)}2,
\qquad 
\langle f, g\rangle_*(q)=(f*g^c)_0(q)
$$
for any $q\in\Omega$.
\end{definiz}

\begin{remark}
In terms of the notation of Proposition~\ref{GMP} we can rewrite the above intrinsic expressions in the following form:
\begin{align*}
f\pv  g &=(f_2g_3-f_3g_2)I+(f_3g_1-f_1g_3)J+(f_1g_2-f_2g_1)K \\
\langle f, g\rangle_*&=f_0g_0+f_1g_1+f_2g_2+f_3g_3.
\end{align*}
which in particular show that 
\begin{align*}
f\pv  g &=f_v\pv  g_v=-g_v\pv  f_v\\
\langle f, g\rangle_*&=\langle g, f\rangle_*=\langle f^c, g^c\rangle_*= f_0g_0-(f_v*g_v)_0.
\end{align*}
\end{remark}

These operators can be used to write the $*$-product of two slice-functions in a more explicit form which recalls the formula for the product of two quaternions in the scalar-vector form. This different expression of the $*$-product will turn out to be a useful tool in the study of slice regular functions, simplifying and speeding formerly long and complicated computations.

\begin{prop}\label{star-product}
The $*$ product of $f=f_0+f_v$ and $g=g_0+g_v$
is given by
\begin{equation}\label{star-formula}
f*g=f_0g_0-\langle f_v, g_v\rangle_*+f_0g_v+g_0f_v+f_v\pv  g_v
\end{equation}
\end{prop}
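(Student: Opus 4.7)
The plan is to prove the identity by direct expansion based on the slice-preserving decomposition of Proposition~\ref{GMP}. First I would fix an orthonormal basis $\{1,I,J,K\}$ of $\HH$ with $IJ=K$ and write
\[
f = f_0 + f_1 I + f_2 J + f_3 K, \qquad g = g_0 + g_1 I + g_2 J + g_3 K,
\]
with all the components $f_0,\dots,f_3,g_0,\dots,g_3$ lying in $\So_\R(\Omega)$. The third bullet listed right after the definition of $\So_\R(\Omega)$ states that $(\rho_1 a_1) * (\rho_2 a_2) = \rho_1 \rho_2\, a_1 a_2$ whenever $\rho_1,\rho_2 \in \So_\R(\Omega)$ and $a_1,a_2 \in \HH$; combined with the $\HH$-bilinearity of $*$, this reduces the computation of $f*g$ to a sum of sixteen terms of the form $(f_m g_n)\,(e_m e_n)$, where $e_0 = 1,\, e_1 = I,\, e_2 = J,\, e_3 = K$.

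Next I would carry out the quaternion products $e_m e_n$ using $I^2 = J^2 = K^2 = -1$, $IJ=K$, $JK=I$, $KI=J$ (with opposite signs for the reversed products), and collect the resulting sixteen summands into their scalar part and their $I$-, $J$-, $K$-coefficients. The scalar part is $f_0 g_0 - (f_1 g_1 + f_2 g_2 + f_3 g_3)$, while the three imaginary coefficients are, respectively,
\[
f_0 g_1 + g_0 f_1 + (f_2 g_3 - f_3 g_2),\quad f_0 g_2 + g_0 f_2 + (f_3 g_1 - f_1 g_3),\quad f_0 g_3 + g_0 f_3 + (f_1 g_2 - f_2 g_1).
\]
Recalling the Remark that follows the definitions of $\pv$ and $\langle\cdot,\cdot\rangle_*$, and observing that $f_v$ and $g_v$ have vanishing scalar components so that $\langle f_v, g_v\rangle_* = f_1 g_1 + f_2 g_2 + f_3 g_3$, the scalar part matches $f_0 g_0 - \langle f_v, g_v\rangle_*$ and the imaginary coefficients match exactly those of $f_0 g_v + g_0 f_v + f_v \pv g_v$, yielding Formula~(\ref{star-formula}).

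No substantial obstacle is expected: the argument is essentially the scalar/vector form of the product of two quaternions applied coefficient-wise via slice-preserving functions. The only care required is to keep the sign conventions in the quaternion multiplication table consistent with the explicit expressions for $f_v \pv g_v$ and $\langle f_v, g_v\rangle_*$ recalled in the Remark, so that the regrouping of the sixteen terms produces precisely the five pieces on the right-hand side of~(\ref{star-formula}).
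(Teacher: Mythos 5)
Your proposal is correct and follows essentially the same route as the paper's own proof: expand $f$ and $g$ in the slice-preserving decomposition of Proposition~\ref{GMP}, use $(\rho_1 a_1)*(\rho_2 a_2)=\rho_1\rho_2\,a_1a_2$ to reduce to the sixteen quaternionic basis products, and regroup into scalar and vector parts matching the Remark's expressions for $\langle\cdot,\cdot\rangle_*$ and $\pv$. No gaps.
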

\begin{proof}
Choosing the standard vector basis $\{1,i,j,k\}$ of $\HH$ on $\R$ and writing $f=f_0+f_1i+f_2j+f_3k$ and $g=g_0+g_1i+g_2j+g_3k$ we have 
\begin{align*}
f*g&=(f_0+f_1i+f_2j+f_3k)*(g_0+g_1i+g_2j+g_3k)\\
&=
f_0g_0-f_1g_1-f_2g_2-f_3g_3+f_0(g_1i+g_2j+g_3k)+g_0(f_1i+f_2j+f_3k)\\
&+(f_2g_3-f_3g_2)i+(f_3g_1-f_1g_3)j+(f_1g_2-f_2g_1)k\\
&=f_0g_0-\langle f_v, g_v\rangle_*+f_0g_v+g_0f_v+f_v\pv  g_v,
\end{align*}
where the second equality is a consequence of the definition of the $*$-product and of the fact that $f_0,\ldots,f_3,g_0,\ldots,g_3$ belong to $\So_\R(\Omega)$.
\end{proof}

\begin{remark}\label{symmetrized}
Given $f\in\So(\Omega)$, we can recognize in $\langle f, f\rangle_*
$ its symmetrized function $f^s$.
Written $f$ as $f_0+f_1i+f_2j+f_3k$, 
since $f^c=f_0-f_v$
we have 
$$\langle f, f\rangle_*=f_0^2+f_1^2+f_2^2+f_3^2=f_0g_0-\langle f_v, -f_v\rangle_*+f_0(-f_v)+f_0f_v+f_v\pv  (-f_v)=f*f^c.$$
\end{remark}

\begin{remark}\label{null-symmetrized}
Notice that if $\Omega$ contains real points, then 
 $f_v^s\equiv0$ if and only if $f_v\equiv 0$. If $\Omega$ does not contain any real point, there exist examples in which $f_v^s\equiv 0$ and $f_v\not\equiv 0$.  
Indeed if $\Omega$ contains real points and 
 $f_v^s\equiv0$ then $f_1^2+f_2^2+f_3^2\equiv 0$. On real points  $f_1$, $f_2$ and $f_3$ take real values and therefore 
 $f_1^2+f_2^2+f_3^2\equiv 0$ implies $f_1=f_2=f_3=0$ on the intersection between $\Omega$ and $\R$, which entails $f_1\equiv f_2\equiv f_3\equiv0$ by the identity principle (see~\cite{G-S-St}, Theorem 1.12). If the domain $\Omega$ does not contain any real point there exist $f\in\So(\Omega)$ such that $f_v\not\equiv0$ and $f_v^s\equiv0$ (see~\cite{AA}, Example 2). 
\end{remark}

In a certain sense, Formula~\ref{star-formula} allows us to untangle the $*$-product on $\So(\Omega)$, confining the skewness to $\HH$ and thus simplifying its computaion with respect to Definition~\ref{prodotto-stella} or Theorem~3.4 in \cite{G-S-St}.
Next result characterizes slice-regular functions whose $\pv$-product vanishes identically on $\Omega$ and shows the effectiveness of Proposition~\ref{star-product}. Notice that by definition the vanishing of the $\pv$-product is equivalent to the fact that $f$ and $g$ commute with respect to the $*$-product.

\begin{prop}\label{wedge-product}
Given $f,g\in\So(\Omega)$, the function $f\pv  g$ vanishes identically if and only if $f_v$ and $g_v$ are linearly dependent over $\So_\R(\Omega)$, namely there exist $\alpha,\beta\in\So_\R(\Omega)$ such that $\alpha f+\beta g\equiv 0$ and either $\alpha\not\equiv0$ or $\beta\not\equiv0$.
\end{prop}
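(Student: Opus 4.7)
The plan rests on the explicit coordinate formula for $f\pv g$ recorded in the preceding remark together with two elementary observations. First, the bracket $\pv$ is $\So_\R(\Omega)$-bilinear and alternating: since $\gamma*h=h*\gamma=\gamma h$ whenever $\gamma\in\So_\R(\Omega)$ and $h\in\So(\Omega)$, one has $(\gamma f)\pv g=\gamma(f\pv g)$ (and analogously in the second slot), while $h\pv h\equiv 0$ is trivial. Second, $\So_\R(\Omega)$ is an integral domain: a nonzero element of $\So_\R(\Omega)$ has zero set of empty interior, so any slice function annihilated by it must vanish on an open dense subset of $\Omega$ and hence identically on $\Omega$ by continuity.

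For the sufficient direction I assume a nontrivial $\So_\R$-linear dependence $\alpha f_v+\beta g_v\equiv 0$, with, say, $\alpha\not\equiv 0$. Using $(\alpha f)_v=\alpha f_v$ and $f\pv g=f_v\pv g_v$, the bilinearity and alternation of $\pv$ give
$$
\alpha(f\pv g)=(\alpha f)\pv g=(\alpha f_v)\pv g_v=(-\beta g_v)\pv g_v=-\beta(g_v\pv g_v)\equiv 0,
$$
and the integral-domain property, applied component by component to $f\pv g$ in the basis $\{I,J,K\}$ of Proposition~\ref{GMP}, forces $f\pv g\equiv 0$.

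For the converse I invoke the explicit expression from the preceding remark,
$$
f\pv g=(f_2g_3-f_3g_2)I+(f_3g_1-f_1g_3)J+(f_1g_2-f_2g_1)K,
$$
so that the vanishing of $f\pv g$ is equivalent to the three scalar identities $f_ig_j\equiv f_jg_i$ in $\So_\R(\Omega)$. If $f_v\equiv 0$ (or $g_v\equiv 0$) the linear dependence is trivial; otherwise some coefficient $f_i$ is not identically zero, and after a possible relabeling of the basis of $\HH$ I may assume $f_1\not\equiv 0$. Substituting the identities $f_1g_2=f_2g_1$ and $f_1g_3=f_3g_1$ into $f_1g_v=f_1g_1I+f_1g_2J+f_1g_3K$ produces at once $f_1g_v=g_1f_v$, so the pair $(\alpha,\beta)=(-g_1,f_1)$ supplies the required nontrivial $\So_\R(\Omega)$-linear relation between $f_v$ and $g_v$.

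The only point where genuine care is needed is the integral-domain cancellation step used in the sufficient direction; once this is in place, the rest is a direct coordinate expansion in the decomposition of Proposition~\ref{GMP}.
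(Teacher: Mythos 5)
Your argument is correct and is essentially the paper's own proof: the converse direction uses the same coordinate identities $f_1g_v=g_1f_v$ after reducing to $f_1\not\equiv 0$, and the sufficiency direction uses the same device of taking the $\pv$-product with $g_v$ and cancelling the nonzero slice-preserving factor $\alpha$ via the structure of its zero set. No gaps.
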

\begin{proof}
First suppose $f\pv  g\equiv0$.
If $f_v\equiv 0$ the assertion is trivial by setting $1\cdot f_v+0\cdot g_v\equiv 0$. Otherwise, 
without loss of generality, we can suppose $f_1\not\equiv 0$.
The identity
$f\pv  g\equiv0$ is equivalent to 
$(f_2g_3-f_3g_2)i+(f_3g_1-f_1g_3)j+(f_1g_2-f_2g_1)k\equiv 0$.
Thanks to Proposition~\ref{GMP}, this gives 
$$
\begin{cases}
f_2g_3-f_3g_2\equiv 0, \\ f_3g_1-f_1g_3\equiv 0, \\ f_1g_2-f_2g_1\equiv 0.
\end{cases}
$$
Last two equations of the previous system give $f_1g_v-g_1 f_v\equiv 0$; as $f_1\not\equiv 0$, the functions $f_v$ and $g_v$ are  linearly dependent over $\So_\R(\Omega)$.

Vice versa, if $f_v$ and $g_v$ are  linearly dependent over $\So_\R(\Omega)$, we can
suppose, up to a rearrangment, that there exist $\alpha,\beta\in\So_{\R}(\Omega)$,
with $\alpha\not\equiv 0$, such that $\alpha f_{v}+\beta g_{v}\equiv 0$.
By taking the $\pv$-product with $g_{v}$ we obtain
$$
0\equiv \alpha f_{v}\pv g_{v}+\beta g_{v}\pv g_{v}= \alpha f_{v}\pv g_{v}.
$$
Since $\alpha\in\So_{\R}(\Omega)$ is not identically zero, then its
zero set is a disjoint union of isolated real zeros and isolated spheres with
real center. Therefore the equality $\alpha f_{v}\pv g_{v}\equiv 0$ implies that
$f_{v}\pv g_{v}\equiv 0$ since the product of $\alpha$ and $f_v\pv g_v$ is the pointwise product.
\end{proof}

\section{Square roots of slice-preserving regular functions}

Since the symmetrized of any slice-regular function always belongs to $\So_\R(\Omega)$ and the symmetrized of a 
slice-preserving function coincides with its square, it is natural to ask when an element in $\So_\R(\Omega)$ has a square root in $\So_\R(\Omega)$ and, in particular, when the symmetrized function of a slice-regular function has a square root in $\So_\R(\Omega)$.  The following results completely solve the question, giving necessary and sufficient conditions on a non-zero slice-preserving function in order to be the square of a slice-preserving function; in particular they apply when we look for the square root of the symmetrized of a given function.

As it will be seen in Corollary~\ref{true-sine}, the results contained in this section can be applied to obtain a more explicit form for the $*$-exponential of a slice-regular function.

Throught the whole section we assume that  each connected component of
$\Omega_I=\Omega\cap \C_I$ is simply connected for some $I\in\SF$ (and so for all $I\in\SF$, being $\Omega$ a circular domain).

\begin{prop}\label{real-square-root}
Given $h\in \So_\R(\Omega)\setminus\{0\}$ there exists $f\in \So_\R(\Omega)$ such that $f^2=h$ if and only if
\begin{enumerate}[(i)]
\item the zero set of $h$ consists of  real zeroes of even isolated  multiplicity and isolated spheres $\SF_{q}$ with spherical multiplicity multiple of $4$;
\item if $\Omega\cap \R\neq\emptyset$ then $h(\Omega\cap \R)\subseteq [0,+\infty)$.
\end{enumerate}
\end{prop}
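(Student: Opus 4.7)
The approach is to reduce the problem to classical complex analysis on a single slice $\Omega_I=\Omega\cap\C_I$, and then reconstruct a slice-preserving regular function via the Representation Formula.

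For necessity, if $f\in\So_\R(\Omega)$ satisfies $f^2=h$, then $f$ is real-valued on $\Omega\cap\R$, so $h(\Omega\cap\R)\subseteq[0,+\infty)$, which gives (ii). For (i), fix any $I\in\SF$: the restriction $f|_{\Omega_I}\colon\Omega_I\to\C_I$ is a $\C_I$-valued holomorphic function whose isolated real zeros retain their multiplicity, and whose zeros at the two points $q,\bar z\in\Omega_I$ of an isolated sphere $\SF_q$ of spherical multiplicity $2k$ have complex multiplicity $k$ each. Squaring doubles every order, so the real isolated multiplicities of $h$ are even and the spherical multiplicities are $4k$, a multiple of four.

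For sufficiency, fix $I\in\SF$. By (i), every zero of $h|_{\Omega_I}$ has even complex multiplicity; since each component of $\Omega_I$ is simply connected, factoring out the zeros and taking a holomorphic logarithm of the resulting never-vanishing factor yields a holomorphic square root $\varphi\colon\Omega_I\to\C_I$. The crucial step is to adjust $\varphi$ so that $\varphi(\bar z)=\overline{\varphi(z)}$, which is what makes the Representation Formula produce a slice-preserving extension. Since $h$ is slice-preserving, $\varphi(\bar z)^2=\overline{h(z)}=\overline{\varphi(z)}^2$, so on each component of $\Omega_I$ one has $\varphi(\bar z)=\pm\overline{\varphi(z)}$ with constant sign. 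If $\Omega\cap\R=\emptyset$ the two components of $\Omega_I$ are swapped by conjugation: pick $\varphi$ freely on one of them and define it on the other by $\varphi(z):=\overline{\varphi(\bar z)}$, automatically securing the $+$ sign. If $\Omega\cap\R\neq\emptyset$ then $\Omega_I$ is connected and the sign is globally determined; here I would invoke (ii) together with the identity principle to pick $x\in\Omega\cap\R$ with $h(x)>0$, whence $\varphi(x)^2=h(x)>0$ forces $\varphi(x)\in\R\setminus\{0\}$ and the equality $\varphi(\bar x)=\varphi(x)=\overline{\varphi(x)}$ rules out the $-$ sign.

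Once $\varphi$ has the right symmetry, I would extend it by the Representation Formula to a slice-regular $f\colon\Omega\to\HH$. Writing $\varphi(\alpha+\beta I)=A+BI$ with $A,B\in\R$ and using $\varphi(\alpha-\beta I)=A-BI$, a short computation gives $f(\alpha+\beta J)=\tfrac{1-JI}{2}(A+BI)+\tfrac{1+JI}{2}(A-BI)=A+BJ\in\C_J$ for every $J\in\SF$, so $f\in\So_\R(\Omega)$; and $f^2=h$ holds on $\Omega_I$ by construction, hence on $\Omega$ by the identity principle. The only delicate point I expect is the sign bookkeeping for $\varphi$ on a connected $\Omega_I$ that meets $\R$: hypothesis (ii) is precisely what forces the correct choice of square root and is the heart of the argument.
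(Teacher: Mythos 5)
Your argument is correct and follows essentially the same route as the paper: necessity from the structure of the zero set of a slice-preserving function, and sufficiency by building a holomorphic square root of $h_I$ on the (simply connected) slice, using condition (ii) to force reality on $\Omega\cap\R$ and hence the symmetry $\varphi(\overline z)=\overline{\varphi(z)}$, Schwarz reflection when $\Omega\cap\R=\emptyset$, and regular extension. The only difference is technical: the paper constructs the slice-wise square root via an exhaustion by nested simply connected symmetric domains each containing finitely many zeros (factoring $h_I=P_n^2 g_n$ and normalizing at a real base point), whereas you invoke directly the standard fact that a holomorphic function with only even-order zeros on a simply connected domain admits a global holomorphic square root.
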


\begin{proof}
The necessity of the first condition is due to the fact that functions belonging to $\So_\R(\Omega)\setminus\{0\}$ only have real  isolated zeroes and isolated spherical zeroes with real center (see \cite{G-P}). Squaring a slice-preserving function therefore produces real  isolated zeroes with even multiplicity and  isolated spherical zeroes with real center with multiplicity multiple of $4$, as the spherical multiplicity is always an even number~(see~\cite{G-S-St}, Definition 3.37).
The necessity of the second condition is straightforward.
 
We first perform the proof of the sufficiency of conditions (i) and (ii) in the case when $\Omega$ intersects the real axis.
As $h\not\equiv 0$, we can choose a point $q_0\in\Omega   \cap \R$ with $h(q_0)\neq 0$. Now 
fix $I\in \SF$ and choose a family $\{\Omega_n\}$ of nested domains with compact closure $\overline{\Omega_{n}}$ in $\Omega_I=\Omega\cap \C_I$ such that each $\Omega_n$ is simply connected, symmetric with respect to the real axis and contains $q_0$ (such a family certainly exists thanks to Riemann mapping theorem for domains symmetric with respect to the real axis, see~\cite{Gal-S} Theorem 3.3). 

The restriction $h_{I}$ of $h$ to $\Omega_I$ is a holomorphic function from $\Omega_I$ to $\C_I$ which does not vanish at $q_0$ and, by condition (i), has zeroes of even multiplicity both at real points and at conjugate points outside $\R$. Now denote by $Z_n$ the intersection of the zero set  of $h_I$ with $\Omega_n$: since $h_I(q_0)\neq0$ then $Z_n$ is finite and we can find a polynomial $P_n(z)$ with real coefficients and a holomorphic function $g_n:\Omega_{I}\to \C_I$ such that  
$g_n$ does not vanish on $\overline{\Omega_n}$ and $h_I(z)= P_n^2(z)g_n(z)$ for any $z\in\Omega_I$.
As $g_n$ does not vanish on $\Omega_n$, we can find a holomorphic square root $\gamma_n$ of $g_n$ on $\Omega_n$ such that $P_n(q_0)\gamma_n(q_0)$ is real and positive. Since $P_n(z)$ has real coefficients, condition (ii) implies that $\gamma_n(\Omega_n\cap \R)\subset \R$. 
Then the function $\varphi_n=P_n\cdot\gamma_n$ defined on $\Omega_n$ has the following properties: 
\begin{itemize}
\item it is holomorphic on $\Omega_n$, 
\item it is a square root of $h_I$ on $\Omega_n$, that is $\varphi_n^2(z)=h_I(z)$ for any $z\in\Omega_n$,
\item it maps $\Omega_n\cap \R$  into $\R$,
\item it takes positive value at $q_0$.  
\end{itemize}
It is easily seen that the above properties entail the uniqueness of $\varphi_n$ and therefore $\varphi_n\equiv \varphi_{n+1}$ on $\Omega_n$; hence setting $\varphi(z)=\varphi_n(z)$ for any $z\in\Omega_n$ defines a holomorphic function on $\Omega_I$ which is a square root of $h_I$. Now denote by $f$ the regular extension of $\varphi$ to $\Omega$; it is easily seen that such a function is a square root of $h$. Indeed $f\in \So_\R(\Omega)$ because $f$ coincides with $\varphi$ on $\Omega\cap \R$, moreover $f^2=h$ on $\Omega\cap \R$ which ensures $f^2\equiv h$ on $\Omega$.  

Now suppose that $\Omega   \cap \R=\emptyset$.  
Fix $I\in \SF$ and consider $\Omega_I^+=\Omega\cap \{z\in\C_I\,|\, {\rm Im}(z)>0\}$. As $\Omega_I^{+}$ is simply connected and the set of zeroes of $h_I$ in $\Omega_I$ consists of conjugate points of even multiplicity, 
by reasoning as above, we can find a square root $\varphi^+$ of $h_I$ on $\Omega_I^+$. As the function $h \in\So_\R(\Omega)$, then $h_I$ coincides with its Schwarz reflection on $\Omega_I^-=\Omega_I\setminus\Omega_I^+$, so we can define a holomorphic map $\varphi$ on $\Omega_I$ by extending $\varphi^+$ by Schwarz reflection on $\Omega_I^-$. In this way we obtain a holomorphic square root $\varphi$ of $h_I$ on $\Omega_I$; now denote by $f$ the regular extension of $\varphi$ to $\Omega$; as above it is easily seen that $f\in\So_\R(\Omega)$ is a square root of $h$.  
\end{proof}

The above result can be applied to infer the existence of the square root of the symmetrized of a regular function with suitable zeroes. As previously stated, we assume that each connected component of $\Omega_I$ is simply connected.

\begin{cor}\label{square-root}
Given $g\in \So(\Omega)$ there exists $f\in \So_\R(\Omega)$ such that $f^2=g^s$ if and only if
the zero set of $g$ does not contain non real zeroes of odd isolated multiplicity.
\end{cor}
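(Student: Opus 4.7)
The plan is to reduce this corollary to Proposition~\ref{real-square-root} applied to $h=g^s$. Since $g^s\in\So_\R(\Omega)$, as recalled in Section~1, it suffices to rephrase conditions (i) and (ii) of that proposition in terms of the zero set of $g$. The degenerate cases $g\equiv 0$ (where both sides of the equivalence are trivially true) and $g^s\equiv 0$ (which, by Remark~\ref{null-symmetrized}, can only occur when $\Omega\cap\R=\emptyset$) are to be dealt with separately by taking $f\equiv 0$.

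Next, I would observe that condition (ii) is automatic: if $x\in\Omega\cap\R$, then the $*$-product reduces to the pointwise product and $g^c(x)=\overline{g(x)}$ in $\HH$, so $g^s(x)=g(x)\,\overline{g(x)}=|g(x)|^2\ge 0$.

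For condition (i), I would appeal to the structure theorem for the zero set of a slice-regular function (\cite{G-S-St}, \S3.6). The zero set of $g\not\equiv 0$ consists of three types of components: (1) real isolated zeros, (2) non-real isolated zeros, and (3) isolated spheres $\SF_q$, each carrying a suitable notion of multiplicity. Combining local factorization around each zero with the identity $(h*\ell)^c=\ell^c*h^c$, these translate into zeros of $g^s=g*g^c$ as follows: (1) a real isolated zero of $g$ of multiplicity $m$ gives a real zero of $g^s$ of multiplicity $2m$, always even; (2) a non-real isolated zero of $g$ at $q$ of multiplicity $m$ pairs with the isolated zero of $g^c$ at $\overline{q}\in\SF_q$ to produce a spherical zero of $g^s$ at $\SF_q$ of spherical multiplicity $2m$; (3) a spherical zero of $g$ at $\SF_q$ of spherical multiplicity $2k$ gives a spherical zero of $g^s$ at $\SF_q$ of spherical multiplicity $4k$, always a multiple of $4$. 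Hence condition (i) for $h=g^s$ amounts precisely to requiring that $2m$ in case (2) be a multiple of $4$, which is the assumption that every non-real isolated zero of $g$ have even multiplicity. The most delicate step is the multiplicity accounting in case (2); it is controlled by the fact that the relevant zeros of $g$ and $g^c$ sit at the distinct points $q$ and $\overline{q}$ of $\SF_q$, so no unexpected cancellation occurs in the $*$-product $g*g^c$, and the local contributions add up to the stated spherical multiplicity $2m$.
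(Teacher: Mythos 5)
Your proof is correct and follows essentially the same route as the paper's: both reduce to Proposition~\ref{real-square-root} with $h=g^s$, observe that condition (ii) is automatic since $g^s\ge 0$ on $\Omega\cap\R$, and translate condition (i) into the multiplicity condition on the zeros of $g$. You simply spell out the multiplicity bookkeeping (real zero of multiplicity $m$ gives $2m$; sphere with spherical multiplicity $2k$ and isolated multiplicity $m$ gives $4k+2m$, a multiple of $4$ iff $m$ is even) that the paper leaves as "this request exactly means".
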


\begin{proof}
If $g^s\equiv 0$, we can take $f\equiv 0$. Then we are left to study the case when $g^s\not\equiv 0$.
In \cite{G-P} the zero set of such $g$'s is explicitely described as the union of isolated points and isolated spheres $\SF_q$.
As $g^s$ maps $\Omega\cap \R$ in $ [0,+\infty)$, thanks to Proposition~\ref{real-square-root} it is enough to check that the zeroes of the symmetrized function $g^s$ are only real zeroes of even isolated multiplicity and spherical zeroes of multiplicity multiple of $4$. This request exactly means that the zero set of $g$ cannot contain non real zeroes of odd isolated multiplicity. 
\end{proof}

Next example gives an explicit application of the above corollary.

\begin{example}
Set $g(q)=q+i$. Then $g^s(q)=q^2+1$ has a spherical zero given by $\SF$ with multiplicity $2$ and so it is not the square of any slice-preserving  regular function.
\end{example}

\section{The $*$-exponential of a slice-regular function}

We now enter in the discussion of the $*$-exponential. In~\cite{C-S-St-2}, Colombo, Sabadini e Struppa introduced the following definitions, motivated by the natural challenge of functional calculus in the non-commutative setting, see also \cite{C-S-St-1, G-R}.  
  
\begin{definiz}
Given $f\in \So(\Omega)$ we set 
$$
\exp_*(f)=\sum_{n\in\N}\frac{f^{*n}}{n!}; \qquad
\cos_*(f)=\sum_{n\in\N}\frac{(-1)^nf^{*(2n)}}{(2n)!}; \qquad
\sin_*(f)=\sum_{n\in\N}\frac{(-1)^nf^{*(2n+1)}}{(2n+1)!}.
$$
\end{definiz}  
As we have just seen, natural estimates on circular compact subsets
of $\Omega$ show that the series converge uniformly and therefore $\exp_*(f)$, $\cos_*(f)$, $\sin_*(f)$ are well defined and belong to $\So(\Omega)$.

In some special cases, $\exp_*$, $\cos_*$, $\sin_*$ take the usual form of the complex case and we sometimes denote them also dropping the underscore $*$.

\begin{remark}
If $f\in \So_\R(\Omega)$ then $\exp_*(f)$ can be written as $\sum_{n\in\N}\frac{f^{n}}{n!}$ and belongs to $ \So_\R(\Omega)$; the same holds for $\cos_*(f)$ and $\sin_*(f)$. If there exists $J\in\SF$ such that $f\in \So_J(\Omega)$ then $\exp_*(f)$ can be obtained as the regular extension to $\Omega$ of the complex function $e^{f_J}$, where $f_J$ denotes the restriction of $f$ to $\Omega_J=\Omega\cap \C_J$; in this case $\exp_*(f)$ belongs to $ \So_J(\Omega)$. Again, the same holds for $\cos_*(f)$ and $\sin_*(f)$, too. 
\end{remark}

In the complex case, one of the most peculiar features of the exponental is its behaviour with respect to the sum; in the quaternionic case this happens under special conditions. The most natural hypothesis is the commutation of $f$ and $g$ which, due to Proposition~\ref{wedge-product}, is equivalent to the fact that $f_v$ and $g_v$ are linearly dependent over $\So_\R(\Omega)$. At the end of the paper we will prove a more refined result, see Theorem~\ref{super-equality}, containing an unexpected couple of conditions on the functions $f$ and $g$ which are equivalent to the equality $\exp_*(f+g)=\exp_*(f)*\exp_*(g)$.

\begin{prop}\label{lin-dep}
If $f_v$ and $g_v$ are linearly dependent on 
 $\So_\R(\Omega)$, then 
$$
\exp_*(f+g)=\exp_*(f)*\exp_*(g).
$$
In particular the above equality holds if either $f$ or $g$ belong to $\So_\R(\Omega)$.
 \end{prop}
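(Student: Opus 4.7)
The cleanest route is to reduce the identity to a pointwise statement for stem functions, where $\HH \otimes_\R \C$ is a finite-dimensional associative unital $\R$-algebra and the classical algebraic identity $\exp(A+B) = \exp(A)\exp(B)$ applies to commuting elements. Write $f = \mathcal I(F)$ and $g = \mathcal I(G)$. By Definition~\ref{prodotto-stella} one has $f*g = \mathcal I(FG)$ with pointwise product in $\HH \otimes_\R \C$, so $f^{*n} = \mathcal I(F^n)$ and, by $\R$-linearity and continuity of $\mathcal I$ on compact subsets of $D$, $\exp_*(f) = \mathcal I(\exp(F))$, where $\exp(F)(z) = \sum_{n \in \N} F(z)^n/n!$ is the exponential in the normed $\R$-algebra $\HH \otimes_\R \C$ applied pointwise.

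First I would establish commutativity. The hypothesis that $f_v, g_v$ are linearly dependent over $\So_\R(\Omega)$ is, by Proposition~\ref{wedge-product}, equivalent to $f \pv g \equiv 0$, that is to $f*g = g*f$ by the very definition of $\pv$. In stem-function terms this becomes $F(z) G(z) = G(z) F(z)$ for every $z \in D$, so $F(z)$ and $G(z)$ commute pointwise in $\HH \otimes_\R \C$.

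Next I would invoke the standard fact that in any finite-dimensional associative unital $\R$-algebra, for commuting elements $A,B$, the binomial identity $(A+B)^n = \sum_{k=0}^n \binom{n}{k} A^k B^{n-k}$ together with absolute convergence of the exponential series gives $\exp(A+B) = \exp(A) \exp(B)$. Applied pointwise on $D$ to $A = F(z)$, $B = G(z)$, this yields $\exp(F+G) = \exp(F)\, \exp(G)$ as $\HH \otimes_\R \C$-valued functions on $D$. Taking $\mathcal I$ of both sides and using once more $\mathcal I(HK) = \mathcal I(H) * \mathcal I(K)$ and the $\R$-linearity/continuity of $\mathcal I$, one obtains $\exp_*(f+g) = \exp_*(f) * \exp_*(g)$. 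The "in particular" assertion is immediate: if, say, $f \in \So_\R(\Omega)$ then $f_v \equiv 0$, so $1 \cdot f_v + 0 \cdot g_v \equiv 0$ exhibits the required linear dependence.

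The only mildly delicate step is the interchange of $\mathcal I$ with the infinite sum. This is handled by observing that, since $F$ and $G$ are bounded on any compact $L \subset D$, the partial sums of $\sum_n F^n/n!$ (respectively $\sum_n G^n/n!$, $\sum_n (F+G)^n/n!$) converge uniformly on $L$ in the finite-dimensional normed space $\HH \otimes_\R \C$; uniform convergence of the stem functions on circular compact sets translates, via $\mathcal I$, into uniform convergence of the corresponding slice functions on the associated circular compact subsets of $\Omega$, matching the convergence regime already used to define $\exp_*$ in the introduction. This is the main point to verify carefully, after which the rest is purely algebraic.
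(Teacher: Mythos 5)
Your proof is correct, and it rests on the same essential input as the paper's — commutativity of $f$ and $g$ under the $*$-product, obtained from the linear-dependence hypothesis via Proposition~\ref{wedge-product} — but the execution is routed differently. The paper stays entirely inside $\So(\Omega)$: it expands $(f+g)^{*n}$ by the binomial theorem for the (now commutative) $*$-product and rearranges the resulting double series into a Cauchy product, with convergence handled exactly as in the definition of $\exp_*$. You instead push everything down to stem functions, note that $f*g=\mathcal I(FG)$ makes $\mathcal I$ an algebra isomorphism onto its image, and invoke the classical identity $\exp(A+B)=\exp(A)\exp(B)$ for commuting elements of the finite-dimensional normed algebra $\HH\otimes_\R\C$, applied pointwise on $D$. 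What your route buys is that the series rearrangement is outsourced to a standard Banach-algebra fact and the non-commutativity of the $*$-product is never touched directly; the price is the extra bookkeeping you correctly flag, namely that $\mathcal I$ is injective (so $FG=GF$ really is equivalent to $f*g=g*f$) and that $\mathcal I$ commutes with locally uniform limits, so that $\exp_*(f)=\mathcal I(\exp\circ F)$ in the first place. The paper's route keeps the argument self-contained in $\So(\Omega)$ and makes the convergence question literally the one already settled when $\exp_*$ was defined. Both are complete proofs; your handling of the ``in particular'' clause (exhibiting $1\cdot f_v+0\cdot g_v\equiv 0$) also matches the intended reading of linear dependence in Proposition~\ref{wedge-product}.
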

\begin{proof}
It holds:
\begin{align*}
\exp_*(f+g)&\!=\!\sum_{n\in\N}\frac{(f+g)^{*n}}{n!}\!=\!
\sum_{n\in\N}\frac{1}{n!}\sum_{m\leq n} \!\binom{n}{m} f^{*m}* g^{*(n-m)}\!=\!
\sum_{n\in\N}\sum_{m\leq n}\!\frac{1}{m!(n-m)!} f^{*m} *g^{*(n-m)}\\
&=\sum_{m\in\N}\sum_{n\geq m} \frac{f^{*m}}{m!}*\frac{g^{*(n-m)}}{(n-m)!}
=\sum_{m\in\N} \frac{f^{*m}}{m!}*\sum_{n\geq m}\frac{g^{*(n-m)}}{(n-m)!}=
\left(\sum_{m\in\N} \frac{f^{*m}}{m!}\right)*\left(\sum_{\nu\in\N}\frac{g^{*\nu}}{\nu!}\right)\\
&=\exp_*(f)*\exp_*(g),
\end{align*}
where in the second equality we took into account the fact that $f$ and $g$ commute because of Proposition~\ref{wedge-product}.
\end{proof}

As an immediate consequence we obtain the following 
\begin{cor}\label{exp-reale}
Let $f=f_0+f_v\in\So(\Omega)$, then 
$$
\exp_*(f)=\exp_*(f_0)*\exp_*(f_v)=\exp(f_0)\exp_*(f_v).
$$
In particular if $f_v(q_0)=0$ then $\exp_*(f)(q_0)=\exp(f_0)(q_0)$.
\end{cor}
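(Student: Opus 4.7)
The plan is to derive the statement as an essentially immediate consequence of Proposition~\ref{lin-dep}, together with the bulleted special properties of slice-preserving functions listed after Proposition~\ref{GMP}.

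First I would observe that $f_0 = \tfrac{1}{2}(f+f^c)\in\So_\R(\Omega)$; this is exactly the content of the remark following Proposition~\ref{GMP} (the first real-valued coefficient in the decomposition of $f$ with respect to any orthonormal basis of $\HH$). Since $f_0\in\So_\R(\Omega)$, the ``in particular'' clause of Proposition~\ref{lin-dep} applies to the decomposition $f = f_0 + f_v$ and gives directly
$$
\exp_*(f) = \exp_*(f_0 + f_v) = \exp_*(f_0)\,*\,\exp_*(f_v).
$$
For the second equality, I would invoke the remark immediately after the definition of $\exp_*$: since $f_0\in\So_\R(\Omega)$, the $*$-powers coincide with ordinary powers, so $\exp_*(f_0) = \sum_{n\in\N} f_0^n/n! = \exp(f_0)\in\So_\R(\Omega)$; and then by the first bullet point after Proposition~\ref{GMP}, the $*$-product of a slice-preserving function with any slice-regular function reduces to the pointwise product, yielding $\exp_*(f_0)*\exp_*(f_v) = \exp(f_0)\,\exp_*(f_v)$.

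For the final ``in particular'' assertion, suppose $f_v(q_0) = 0$. The estimate recalled in the introduction (namely $f^{*n}(q) = 0$ whenever $f(q) = 0$, which is an immediate induction using~\cite[Theorem~3.4]{G-S-St}) gives $f_v^{*n}(q_0) = 0$ for every $n\geq 1$, so $\exp_*(f_v)(q_0) = 1$. Combining with the factorization already proved yields $\exp_*(f)(q_0) = \exp(f_0)(q_0)\cdot 1 = \exp(f_0)(q_0)$. There is no real obstacle here; the only point worth flagging is the need to record explicitly that $f_0\in\So_\R(\Omega)$ so that Proposition~\ref{lin-dep} and the $*$-vs-pointwise product identity are both available.
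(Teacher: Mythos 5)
Your proof is correct and follows exactly the route the paper intends: the paper presents this corollary as "an immediate consequence" of Proposition~\ref{lin-dep} applied to the decomposition $f=f_0+f_v$ with $f_0\in\So_\R(\Omega)$, with the second equality coming from the fact that $\exp_*(f_0)=\exp(f_0)$ is slice-preserving so the $*$-product reduces to the pointwise product. Your additional justification of the final clause via $f_v^{*n}(q_0)=0$ for $n\geq 1$ is the natural (and correct) way to fill in that detail.
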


The above corollary allows us to interpretate the $*$-exponential in a cosine-sine form. 

\begin{prop}\label{star-exponential}
Let $f=f_0+f_v\in\So(\Omega)$, then 
\begin{equation}\label{cosine-sine}
\exp_*(f)=\exp_*(f_0)\left(\sum_{m\in\N} \frac{(-1)^m (f_v^s)^m}{(2m)!}+\sum_{m\in\N} \frac{(-1)^m (f_v^s)^m}{(2m+1)!} f_v\right). 
\end{equation}
\end{prop}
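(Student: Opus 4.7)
The plan is to reduce everything to computing powers of $f_v$ alone, then splitting by parity.

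First I would invoke Corollary~\ref{exp-reale}, which gives $\exp_*(f)=\exp_*(f_0)\exp_*(f_v)$; since $f_0\in\So_\R(\Omega)$, the outer factor $\exp_*(f_0)=\exp(f_0)$ is slice-preserving and multiplies every slice-regular function by pointwise multiplication. Thus the whole problem reduces to establishing the identity
$$
\exp_*(f_v)=\sum_{m\in\N}\frac{(-1)^m(f_v^s)^m}{(2m)!}+\sum_{m\in\N}\frac{(-1)^m(f_v^s)^m}{(2m+1)!}f_v.
$$

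The key observation is that $f_v^{*2}=-f_v^s$. Indeed, by Definition~\ref{vector} we have $f_v=\frac{f-f^c}{2}$, and a direct check (or Proposition~\ref{GMP} applied to the 4-tuple $(0,f_1,f_2,f_3)$) shows that $(f_v)^c=-f_v$. Therefore
$$
f_v^s=f_v*(f_v)^c=-f_v*f_v=-f_v^{*2}.
$$
Now I would argue by induction that
$$
f_v^{*(2m)}=(-1)^m(f_v^s)^m,\qquad f_v^{*(2m+1)}=(-1)^m(f_v^s)^m f_v,
$$
using that $f_v^s\in\So_\R(\Omega)$ is central for the $*$-product (Proposition~\ref{GMP} together with the bullet points on slice-preserving functions), so that $*$-multiplication by $f_v^s$ coincides with pointwise multiplication and commutes with everything.

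Finally I would split $\exp_*(f_v)=\sum_{n\in\N}\frac{f_v^{*n}}{n!}$ into the even and odd parts, substitute the formulas just obtained, and factor $f_v$ out of the odd part on the right (which is legitimate because $f_v^s\in\So_\R(\Omega)$ commutes with $f_v$). Multiplying through by $\exp_*(f_0)$ yields~\eqref{cosine-sine}. Convergence and the rearrangements are justified by the uniform convergence on compact subsets already established after Definition~\ref{prodotto-stella} for the defining series of $\exp_*$; the only structural fact needed is the centrality of $\So_\R(\Omega)$, so I do not expect any genuine obstacle — the one step worth highlighting is the sign identity $f_v^{*2}=-f_v^s$, which is really where the trigonometric form originates.
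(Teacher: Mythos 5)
Your proposal is correct and follows essentially the same route as the paper: reduce to $f_v$ via Corollary~\ref{exp-reale}, establish $f_v^{*2}=-f_v^s$, split the exponential series by parity, and use that $f_v^s\in\So_\R(\Omega)$ turns the $*$-powers into pointwise powers. The only cosmetic difference is that you derive the key sign identity from $(f_v)^c=-f_v$ and the definition of the symmetrized function, whereas the paper reads it off from the scalar-vector product formula of Proposition~\ref{star-product} together with Remark~\ref{symmetrized}; both are valid.
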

\begin{proof}
Thanks to Corollary~\ref{exp-reale} we can perform the computation in the case when $f_0\equiv 0$.

\noindent
Notice that Proposition~\ref{star-product} and Remark~\ref{symmetrized} imply
$(f_v)^{*2}=f_v*f_v=-\langle f_v,f_v\rangle_*=-f_v^s$.
Now 
\begin{align*}
\exp_*(f_v)&=\sum_{n\in\N}\frac{f_v^{*n}}{n!}=
\sum_{n\ \rm even}\frac{f_v^{*n}}{n!}+\sum_{n\ \rm odd}\frac{f_v^{*n}}{n!}=
\sum_{m\in\N}\frac{f_v^{*(2m)}}{(2m)!}+\sum_{m\in\N}\frac{f_v^{*(2m+1)}}{(2m+1)!}\\
&=\sum_{m\in\N}\frac{(f_v^{*2})^{*m}}{(2m)!}+\sum_{m\in\N}\frac{(f_v^{*2})^{*m}}{(2m+1)!}*f_v=
\sum_{m\in\N}\frac{(-f_v^{s})^{*m}}{(2m)!}+\sum_{m\in\N}\frac{(-f_v^{s})^{*m}}{(2m+1)!}*f_v\\
&=\sum_{m\in\N}\frac{(-1)^m(f_v^{s})^{m}}{(2m)!}+\sum_{m\in\N}\frac{(-1)^m(f_v^{s})^{m}}{(2m+1)!}f_v
\end{align*}
because $f_v^{s}$ is slice-preserving and the $*$-product becomes the pointwise product.
\end{proof}

Formula~\ref{star-exponential} allows us to write $\exp_*(f)$ as a local cosine-sine analogous of the complex case. 

\begin{cor}\label{sine-cosine}
Let $f=f_0+f_v\in\So(\Omega)$. If 
\begin{enumerate}
\item $f_v^s(q_0)=0$, then 
 $$\exp_*(f)(q_0)=\exp_*(f_0)(q_0)\left(1+ f_v(q_0)\right); $$
\item $f_v^s(q_0)$ is a positive real, denote by $x_0$ one of the square roots of $f_v^s(q_0)$, then 
 $$\exp_*(f)(q_0)=\exp_*(f_0)(q_0)\left(\cos x_0+\frac{\sin x_0}{x_0} f_v(q_0)\right); $$
\item $f_v^s(q_0)$ is a negative real, denote by $x_0$ one of the square roots of $-f_v^s(q_0)$, then 
 $$\exp_*(f)(q_0)=\exp_*(f_0)(q_0)\left(\cosh x_0+\frac{\sinh x_0}{x_0} f_v(q_0)\right);$$ 
\item $f_v^s(q_0)=\alpha_0+\beta_0 J$  with $\beta_0\neq 0$ denote by $a_0+b_0 J$ one of the square roots of  $f_v^s(q_0)$, then 
$$\exp_*(f)(q_0)=\exp_*(f_0)(q_0)\left(\cos (a_0+b_0 J)+\frac{\sin (a_0+b_0 J)}{a_0+b_0 J} f_v(q_0)\right) .$$
\end{enumerate}
\end{cor}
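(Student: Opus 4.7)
The plan is to deduce all four statements by evaluating Formula~(\ref{cosine-sine}) of Proposition~\ref{star-exponential} at the single point $q_{0}$, and then recognizing the resulting numerical series. Since the series in~(\ref{cosine-sine}) converges uniformly on circular compact subsets of $\Omega$ and each summand $(f_v^s)^m$ lies in $\So_\R(\Omega)$, the $*$-product with $f_v$ collapses to the ordinary pointwise product and evaluation term by term is legitimate. Setting $w := f_v^s(q_0)\in\HH$, the whole computation reduces to understanding the two scalar series
$$
C(w)=\sum_{m\in\N}\frac{(-1)^m w^m}{(2m)!},\qquad S(w)=\sum_{m\in\N}\frac{(-1)^m w^m}{(2m+1)!},
$$
because Proposition~\ref{star-exponential} evaluated at $q_0$ reads $\exp_*(f)(q_0)=\exp_*(f_0)(q_0)\bigl(C(w)+S(w)f_v(q_0)\bigr)$.

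For (1), $w=0$ kills every term with $m\ge 1$, so $C(0)=S(0)=1$ and the first formula follows. For (2), writing $w=x_0^2$ with $x_0\in\R$ we find $(-1)^m w^m=(-1)^m x_0^{2m}$; hence $C(x_0^2)=\cos x_0$ and $S(x_0^2)=\sin x_0/x_0$, where the division by $x_0$ is harmless since a constant term $1$ is absorbed into the analytic expression. For (3), with $w=-x_0^2$ one gets $(-1)^m w^m=x_0^{2m}$, yielding $C(-x_0^2)=\cosh x_0$ and $S(-x_0^2)=\sinh x_0/x_0$ from the standard hyperbolic Taylor series.

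Case (4) is the only one needing a structural remark rather than a bare computation. Since $\beta_0\ne 0$, the element $w=\alpha_0+\beta_0 J$ lies in $\C_J\setminus\R$, and both of its square roots $\pm(a_0+b_0J)$ belong to $\C_J$ too; in particular $z_0:=a_0+b_0J\ne 0$. Because $\C_J$ is a real subalgebra of $\HH$ isomorphic to $\C$, every power $w^m=z_0^{2m}$ sits in $\C_J$, and the two series $C(w)$ and $S(w)$ are the very same power series (with the same radius of convergence) as the complex cosine and complex sine at $z_0$, divided in the second instance by $z_0$. Hence $C(w)=\cos z_0$ and $S(w)=\sin z_0/z_0$, and the stated formula follows.

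There is no real obstacle in this proof; the only point that deserves a line of justification is the legitimacy of passing to ordinary multiplication at $q_0$ in case (4), which rests on the fact that $f_v^s(q_0)$ and its chosen square root both lie in the commutative subalgebra $\C_J$, so every formal manipulation carried out in $\C$ transports verbatim to $\C_J$.
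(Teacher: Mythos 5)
Your proof is correct and takes essentially the same route as the paper's: both evaluate Formula~\eqref{cosine-sine} at $q_0$ and identify the resulting scalar series as $\cos$, $\sin/x$, $\cosh$, $\sinh/x$ according to whether $f_v^s(q_0)$ is zero, positive, negative, or non-real, with the same closing observation for case (4) that the square root and the quotient are legitimate because everything lives in the commutative subalgebra $\C_J$.
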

\begin{proof}
The proof is a trivial application of Formula~\eqref{cosine-sine} since in (1) the two power series sum both up to $1$; in  
$(2)$ we have $f_v^s(q_0)=x_0^2$; in $(3)$ we have $f_v^s(q_0)=-x_0^2$ and in $(4)$ we have $f_v^s(q_0)=(a_0+b_0 J)^2$.
Notice that the expression $\frac{\sin(a_0+b_0 J)}{a_0+b_0 J}f_v(q_0)$  in $(4)$ is well defined because all factors lie in the same $\mathbb{C}_J$.
\end{proof}

If the zero set of $f_v$ does not contain non real isolated zeroes of odd multiplicity, the above result can be made more precise.  Indeed in this case Corollary~\ref{square-root} ensures we can find a regular function in $\So_\R(\Omega)$ whose square is $f_v^{s}$ and which therefore gives a global determination of the square root of $f_v^{s}$; in order to stress the analogy with the complex case we denote it by $\sqrt{f_v^s}$.

\begin{cor}\label{true-sine}
Let $f=f_0+f_v\in\So(\Omega)$. If  each connected component of $\Omega_I$ is simply connected,
$f_v^s$ is not identically zero and the zero set of $f_v$ does not contain non real zeroes of odd isolated multiplicity then 
\begin{equation}\label{true-sine-eq}
\exp_*(f)=\exp_*(f_0)
\left(\cos_*\left(\sqrt{f_v^s}\right)+\frac{\sin_*\left(\sqrt{f_v^s}\right)}{\sqrt{f_v^s}} f_v\right),
\end{equation}
where $f_{v}^{s}$ does not vanish.
\end{cor}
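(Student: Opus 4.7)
The plan is to derive the identity by combining Proposition~\ref{star-exponential} with the existence of a slice-preserving square root of $f_v^s$ supplied by Corollary~\ref{square-root}. First I would invoke Corollary~\ref{exp-reale} to factor $\exp_*(f)=\exp(f_0)\exp_*(f_v)$; since $\exp(f_0)\in\So_\R(\Omega)$, it commutes with every slice-regular function under the $*$-product, and the claim reduces to showing
\[
\exp_*(f_v)=\cos_*\!\left(\sqrt{f_v^s}\right)+\frac{\sin_*\!\left(\sqrt{f_v^s}\right)}{\sqrt{f_v^s}}\,f_v.
\]

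Next, the hypothesis that $f_v^s\not\equiv 0$ and that the zero set of $f_v$ contains no non-real zeroes of odd isolated multiplicity, together with the simple connectedness of each component of $\Omega_I$, is exactly the content of Corollary~\ref{square-root} applied with $g=f_v$. This provides a function $\sqrt{f_v^s}\in\So_\R(\Omega)$ whose square is $f_v^s$. Because $\sqrt{f_v^s}$ is slice-preserving, every $*$-power of it coincides with its ordinary pointwise power, and the $*$-product by it reduces to the pointwise product. Expanding the defining series of $\cos_*$ and $\sin_*$ and collecting even and odd indices then yields
\[
\cos_*\!\left(\sqrt{f_v^s}\right)=\sum_{m\in\N}\frac{(-1)^m (f_v^s)^m}{(2m)!},\qquad \sin_*\!\left(\sqrt{f_v^s}\right)=\sqrt{f_v^s}\cdot\sum_{m\in\N}\frac{(-1)^m (f_v^s)^m}{(2m+1)!}.
\]

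Finally I would compare these two expansions with the two summands appearing in Formula~\eqref{cosine-sine} of Proposition~\ref{star-exponential}: the first summand of \eqref{cosine-sine} is literally $\cos_*(\sqrt{f_v^s})$, while the coefficient series multiplying $f_v$ in \eqref{cosine-sine} coincides with the pointwise quotient $\sin_*(\sqrt{f_v^s})/\sqrt{f_v^s}$ wherever $\sqrt{f_v^s}$ is non-zero. Substitution yields \eqref{true-sine-eq} at every point of $\Omega$ where $f_v^s\neq 0$, which matches the qualification ``where $f_v^s$ does not vanish'' in the statement.

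The only slightly delicate point, and the one I expect to be the main obstacle to a cleanly written argument, is the interpretation of the quotient $\sin_*(\sqrt{f_v^s})/\sqrt{f_v^s}$ at points where $\sqrt{f_v^s}$ vanishes. Since $\sqrt{f_v^s}\in\So_\R(\Omega)$, the quotient is a genuine pointwise quotient of slice-preserving functions on the (open and dense) set where the denominator is non-zero, and it extends slice-regularly to all of $\Omega$ as the convergent series $\sum_{m}\frac{(-1)^m (f_v^s)^m}{(2m+1)!}$. By the identity principle for slice-regular functions, \eqref{true-sine-eq} then holds globally once it is verified on a non-empty open subset of $\Omega$, so restricting to the open set where $f_v^s\neq 0$ is harmless.
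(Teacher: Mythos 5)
Your argument is correct and follows essentially the same route as the paper: factor out $\exp_*(f_0)$, obtain $\sqrt{f_v^s}\in\So_\R(\Omega)$ from Corollary~\ref{square-root} applied to $f_v$, and identify the two series in Formula~\eqref{cosine-sine} with $\cos_*\bigl(\sqrt{f_v^s}\bigr)$ and $\sin_*\bigl(\sqrt{f_v^s}\bigr)/\sqrt{f_v^s}$. Your closing remark on interpreting the quotient at zeroes of $\sqrt{f_v^s}$ via the everywhere-convergent series is a point the paper handles only implicitly (through the qualification ``where $f_v^s$ does not vanish''), but it does not change the substance of the proof.
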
  
\begin{proof}
As
$$\sum_{m\in\N}\frac{(-1)^m(f_v^{s})^{m}}{(2m)!}=\sum_{m\in\N}\frac{(-1)^m(\sqrt{f_v^s})^{2m}}{(2m)!}=
\cos_*\left(\sqrt{f_v^s}\right)$$ and $$\sum_{m\in\N}\frac{(-1)^m(f_v^{s})^{m}}{(2m+1)!}=
\sum_{m\in\N}\frac{(-1)^m(\sqrt{f_v^s})^{2m}}{(2m+1)!}=\frac1{\sqrt{f_v^s}}\sum_{m\in\N}\frac{(-1)^m(\sqrt{f_v^s})^{2m+1}}{(2m+1)!}=\frac{\sin_*\left(\sqrt{f_v^s}\right)}{\sqrt{f_v^s}},$$
the assertion follows from~\eqref{cosine-sine}.
\end{proof}

\begin{remark}
In the hypothesis of Corollary~\ref{true-sine}, at points where $f_v^s$ does not vanish, we can write Formula~\eqref{true-sine-eq} in the form
\begin{equation*}
\exp_*(f)=\exp_*(f_0)
\left(\cos_*\left(\sqrt{f_v^s}\right)+\sin_*\left(\sqrt{f_v^s}\right)\frac{f_v}{\sqrt{f_v^s}}\right).
\end{equation*}
We underline that $\left(\frac{f_v}{\sqrt{f_v^s}}\right)^s\equiv 1$ outside the zero set of $f_v^s$ and therefore the function $\frac{f_v}{\sqrt{f_v^s}}$ imitates
the behaviour of the imaginary unit in $\C$.
\end{remark}

As a further consequence of Proposition~\ref{star-exponential} we can describe all functions whose $*$-exponential is either slice-preserving or $\C_J$-preserving for a suitable $J\in\SF$.
To simplify computations we set
$$
\mu(f)=\sum_{m\in\N} \frac{(-1)^m (f_v^s)^m}{(2m)!},\qquad
\nu(f)=\sum_{m\in\N} \frac{(-1)^m (f_v^s)^m}{(2m+1)!}
$$
which are slice-preserving regular functions. Then~\eqref{cosine-sine}
 can be reformulated as 
\begin{equation}\label{mu-nu}
\exp_*(f)=\exp_*(f_0)\left(\mu(f)+\nu(f) f_v\right). 
\end{equation}

Notice that, thanks to Corollary~\ref{sine-cosine}, the equality $\nu(f)(q_0)=0$ holds if and only if $f_v^s(q_0)$ belongs to $\mathcal{Z}=\{n^{2}\pi^{2}\,|\,n\in\Z\setminus\{0\}\}$, that is there exists $n\in\Z\setminus\{0\}$ such that $f_v^s(q_0)=n^2\pi^2$. Notice that at such points 
$\mu(f)(q_0)$ is equal to $(-1)^n$.

Formula~\eqref{mu-nu} enables us to characterize when the $*$-exponential of a regular function preserves one or all slices. Next example shows that things are not as simple as one could imagine.

\begin{example}
If $f(q)=\pi\cos(q)i+\pi\sin(q)j$, then $f_1(q)=\pi\cos(q)$,  $f_2(q)=\pi\sin(q)$ and therefore $f_v^s(q)=f_1^2(q)+f_2^2(q)=\pi^2\cos^2(q)+\pi^2\sin^2(q)=\pi^2$ for all $q\in\HH$. By Corollary~\ref{true-sine} 
this implies that $\exp_*(f)=\cos(\pi)=-1$. It is easy to check that the function $f$ does not preserve any slice but its $*$-exponential trivially preserves all slices. 
\end{example}

\begin{prop}
Let $f\in\So(\Omega)$. Then 
\begin{enumerate}[(i)]
\item $\exp_*(f)\in\So_\R(\Omega)$ if and only if either $f_v\equiv 0$ or
$f_v^s$ is a constant which belongs to $\mathcal{Z}$,
\item  there exists  $J\in\SF$ such that $\exp_*(f)\in\So_J(\Omega)\setminus \So_\R(\Omega)$  if and only if $f_v\in \So_J(\Omega)$ and it is not a constant which belongs to $\Z\pi J$.
\end{enumerate}
\end{prop}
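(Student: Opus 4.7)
The plan is to work entirely from Formula~\eqref{mu-nu}, $\exp_*(f)=\exp(f_0)(\mu(f)+\nu(f)f_v)$, in which $\exp(f_0),\mu(f),\nu(f)\in\So_\R(\Omega)$ and $\exp(f_0)$ is pointwise invertible (its inverse being $\exp(-f_0)$ by Proposition~\ref{lin-dep}). Inspecting the scalar/vector decomposition of the right-hand side, one sees that $\exp_*(f)\in\So_\R(\Omega)$ is equivalent to $\nu(f)\,f_v\equiv 0$, while $\exp_*(f)\in\So_J(\Omega)$ is equivalent to $\nu(f)\,f_v=B\cdot J$ for some $B\in\So_\R(\Omega)$.

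For (i) I would invoke the fact that $\So(\Omega)$ has no zero divisors on the connected circular domain $\Omega$ to split $\nu(f)\,f_v\equiv 0$ into the two cases $f_v\equiv 0$ and $\nu(f)\equiv 0$. Recognizing $\nu(f)$ as the entire function $\Phi(w)=\sin(\sqrt w)/\sqrt w=\sum_m(-1)^m w^m/(2m+1)!$ evaluated at $w=f_v^s$, whose zeros in $\C$ are exactly the points of $\mathcal Z=\{n^2\pi^2:n\in\Z\setminus\{0\}\}$, the vanishing $\nu(f)\equiv 0$ forces the holomorphic map $f_v^s|_{\C_I}$ into the discrete real set $\mathcal Z$, hence to be constant (the case $\Omega\cap\R=\emptyset$ is handled by Schwarz reflection, available because $f_v^s\in\So_\R(\Omega)$). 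This gives the characterization claimed in (i).

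For (ii), assuming first that $\exp_*(f)\in\So_J(\Omega)\setminus\So_\R(\Omega)$, part (i) forces $\nu(f)\not\equiv 0$. Writing $J=J_1i+J_2j+J_3k$ and $f_v=f_1i+f_2j+f_3k$ and comparing the components in the identity $\nu(f)f_v=BJ$, one obtains $\nu(f)(J_{m'}f_m-J_m f_{m'})\equiv 0$ for all $m,m'$. Cancelling $\nu(f)$ by the integral-domain property gives $J_{m'}f_m=J_m f_{m'}$ globally; defining $h=f_{m_0}/J_{m_0}\in\So_\R(\Omega)$ for any index with $J_{m_0}\neq 0$ yields $f_v=hJ\in\So_J(\Omega)$. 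Conversely, if $f_v=hJ$ with $h\in\So_\R(\Omega)$, then $\nu(f)f_v=(\nu(f)h)\,J$ is automatically in $\So_J(\Omega)$, so $\exp_*(f)\in\So_J(\Omega)$.

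Finally, when $f_v=hJ$, the identities $h^c=h$ and $J^c=-J$ yield $f_v^c=-f_v$ and hence $f_v^s=-(f_v* f_v)=-h^2J^2=h^2$. So ``$f_v^s$ is a constant in $\mathcal Z$'' becomes ``$h\equiv\pm n\pi$ for some $n\in\Z\setminus\{0\}$'' and ``$f_v\equiv 0$'' becomes ``$h\equiv 0$''; combined, the two configurations excluded by (i) are exactly those in which $f_v$ is a constant belonging to $\Z\pi J$. Removing them gives the condition in (ii). The main technical obstacle throughout is transforming pointwise $*$-product identities into global structural statements on $f_v$ and $f_v^s$, which is handled via the integral-domain structure of $\So(\Omega)$ combined with the discreteness of $\mathcal Z$.
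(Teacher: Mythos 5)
Your argument follows essentially the same route as the paper's proof: reduce via Formula~\eqref{mu-nu} to the conditions $\nu(f)f_v\equiv 0$ (for (i)) and $\nu(f)f_v=BJ$ with $B\in\So_\R(\Omega)$ (for (ii)), cancel the slice-preserving factor $\nu(f)$, and use the discreteness of the zero set $\mathcal{Z}$ of $w\mapsto\sum_{m}(-1)^m w^m/(2m+1)!$ to force $f_v^s$ to be constant. Your component computation in (ii) leading to $f_v=hJ$ is just a more explicit version of the paper's one-line deduction, and your closing paragraph correctly reassembles the two excluded configurations of (i) into ``$f_v$ is a constant belonging to $\Z\pi J$''.

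The one point you must repair is the cancellation step. You justify splitting $\nu(f)f_v\equiv 0$ into the two alternatives (and, later, cancelling $\nu(f)$ from $\nu(f)(J_{m'}f_m-J_mf_{m'})\equiv 0$) by asserting that $\So(\Omega)$ has no zero divisors. That assertion is false when $\Omega\cap\R=\emptyset$: the paper's Remark~\ref{null-symmetrized} exhibits $f_v\not\equiv 0$ with $f_v*f_v^c=f_v^s\equiv 0$ on such domains, and Theorem~\ref{super-equality} assumes the presence of real points precisely in order to have an integral domain. Since the present proposition is stated for an arbitrary circular domain, you cannot invoke that property. The step is nonetheless salvageable because one of your two factors, namely $\nu(f)$, lies in $\So_\R(\Omega)$: for $\alpha\in\So_\R(\Omega)\setminus\{0\}$ the product $\alpha g$ is the pointwise product, the zero set of $\alpha$ is a union of isolated real points and isolated spheres with real center (hence closed with empty interior), so $\alpha g\equiv 0$ forces $g\equiv 0$ by continuity. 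This is exactly the argument used in the proof of Proposition~\ref{wedge-product} and in the paper's own proof here; with that substitution your proof is complete.
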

\begin{proof} $(i)$
If $f_v\equiv 0$ then $f\in \So_\R(\Omega)$ and therefore $\exp_*(f) \in \So_\R(\Omega)$.
If $f_v\not\equiv 0$ and $f_v^s$ is a constant which belongs to $\mathcal{Z}$, let us denote by $n\neq 0$ an integer such that $f_v^s\equiv n^2\pi^2$;
then Corollary~\ref{true-sine} 
gives $\exp_*(f)=\exp_*(f_0)
\left(\cos_*\left(n \pi\right)+\frac{\sin_*\left(n\pi\right)}{n\pi} f_v\right) 
=\exp_*(f_0)\cos(n\pi)=(-1)^n\exp_*(f_0)\in\So_\R(\Omega)$.

Now if $\exp_*(f)\in\So_\R(\Omega)$, then the function $\exp_*(f_0)\left(\mu(f)+\nu(f) f_v\right)$ does; since $\exp_*(f_0)$  is never-vanishing, this entails that $\mu(f)+\nu(f) f_v$ belongs to $\So_\R(\Omega)$. 
This fact implies that $\nu(f) f_v$ is identically zero. If $\nu(f)\equiv 0$, then 
$\sum_{m\in\N} \frac{(-1)^m (f_v^s)^m}{(2m+1)!}\equiv 0$ and therefore $f_v^s$ belongs to one of the zeroes of the power series which are given by $\mathcal{Z}=\{n^2\pi^2\, |\, n\in\Z\setminus\{0\}\}$ and hence $f_v^s$ is a constant which belongs to $\mathcal{Z}$. If $\nu(f)\not\equiv0$, we have that  
$f_v=0$  outside the zero set of $\nu(f)$ which is closed and has empty interior, therefore by continuity $f_v\equiv 0$, that is $f\in\So_\R(\Omega)$. 

$(ii)$ If $f_v\in \So_J(\Omega)$ and it is not a constant which belongs to $\Z\pi J$, then we can write $f=f_0+f_1J$ where $f_1\not\equiv n\pi$ for some $n\in\Z$. As $f_v^s=f_1^2$, by Formula~\eqref{cosine-sine} we obtain that 
$\exp_*(f)=\exp_*(f_0)\left(\cos_*(f_1)+\sin_*(f_1)J\right)$. As  $\exp_*(f_0)$ is never-vanishing and $f_1$ is not a constant which belongs to $\Z\pi$, then $\exp_*(f)$ belongs to $\So_J(\Omega)\setminus \So_\R(\Omega)$.

Now, if $\exp_*(f)\in\So_J(\Omega)\setminus \So_\R(\Omega)$, then the function $\exp_*(f_0)\left(\mu(f)+\nu(f) f_v\right)$ does; since $\exp_*(f_0)$  is never-vanishing, this entails that $\mu(f)+\nu(f) f_v$ belongs to $\So_J(\Omega)\setminus \So_\R(\Omega)$. 
This fact implies that $\nu(f) f_v$ is equal to $g_1J$ for some $g_1\in \So_\R(\Omega)\setminus\{0\}$. Since $\nu(f)
\in \So_\R(\Omega)\setminus\{0\}$ we obtain that $f_v\in \So_J(\Omega)\setminus\{0\}$. Would $f_v\equiv n\pi J$ for some $n\in\Z\setminus\{0\}$, then $f_v^s\equiv n^2\pi^2$ and therefore $\nu(f)\equiv 0$, which is a contradiction.
\end{proof}

Formula~\eqref{mu-nu} can also be used to obtain an explicit expression for 
$(\exp_*(f))*(\exp_*(g))$.

\begin{prop}\label{star-product-exponential}
Let $f,g\in \So(\Omega)$. Then $\exp_*(f)*\exp_*(g)$ is equal to
$$\exp_*(f_0)\exp_*(g_0)\left(\mu(f)\mu(g)-\nu(f)\nu(g)\langle f_v,g_v\rangle_*+\nu(f)\nu(g) f_v\pv g_v
+\mu(f)\nu(g) g_v+\mu(g)\nu(f)f_v\right).
$$
\end{prop}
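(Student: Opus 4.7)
The plan is to start from Formula~\eqref{mu-nu}, which says
$$\exp_*(f)=\exp_*(f_0)\bigl(\mu(f)+\nu(f) f_v\bigr),\qquad \exp_*(g)=\exp_*(g_0)\bigl(\mu(g)+\nu(g) g_v\bigr),$$
and to compute the $*$-product of the two right-hand sides. The factors $\exp_*(f_{0}),\exp_*(g_{0}),\mu(f),\nu(f),\mu(g),\nu(g)$ all belong to $\So_{\R}(\Omega)$, so by the first bullet listed after Proposition~\ref{GMP} they commute $*$-multiplicatively with every slice-regular function and their $*$-products coincide with pointwise products. Consequently the scalar factors $\exp_*(f_0)\exp_*(g_0)$ can be pulled out in front, and what remains is to expand
$$\bigl(\mu(f)+\nu(f) f_v\bigr)*\bigl(\mu(g)+\nu(g) g_v\bigr).$$

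First I would distribute the $*$-product over the sum, using the slice-preserving/commutation properties to move the scalar coefficients to the front of each term:
$$\mu(f)\mu(g)+\mu(f)\nu(g)\, g_v+\mu(g)\nu(f)\, f_v+\nu(f)\nu(g)\,(f_v*g_v).$$
The only non-trivial piece is the last one, which involves the $*$-product of two purely ``vector'' functions. Here I apply Proposition~\ref{star-product} to $f_v$ and $g_v$: since $(f_v)_0\equiv 0$ and $(g_v)_0\equiv 0$, formula~\eqref{star-formula} collapses to
$$f_v*g_v=-\langle f_v,g_v\rangle_*+f_v\pv g_v.$$

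Substituting this into the expansion yields
$$\mu(f)\mu(g)-\nu(f)\nu(g)\langle f_v,g_v\rangle_*+\nu(f)\nu(g)\,f_v\pv g_v+\mu(f)\nu(g)\, g_v+\mu(g)\nu(f)\, f_v,$$
which, multiplied by $\exp_*(f_0)\exp_*(g_0)$, is exactly the claimed expression. The main (and really only) obstacle is bookkeeping: one must be careful to invoke the commutation rule for slice-preserving factors at each step so that the coefficients $\mu(f),\nu(f),\mu(g),\nu(g)$ can be freely reordered and combined as pointwise products, while the genuine non-commutativity is isolated in the single term $f_v*g_v$, where Proposition~\ref{star-product} handles it in one stroke.
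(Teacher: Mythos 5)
Your proof is correct and takes exactly the same route as the paper, whose entire proof is the one-line remark that the statement is a direct consequence of Formulas~\eqref{mu-nu} and~\eqref{star-formula}. You have simply written out that computation in full, correctly pulling the slice-preserving factors to the front and isolating the only non-commutative piece in $f_v*g_v$, which Proposition~\ref{star-product} reduces to $-\langle f_v,g_v\rangle_*+f_v\pv g_v$.
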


\begin{proof}
The assumption is a direct offspring of Formulas~\eqref{mu-nu} and~\eqref{star-formula}.  
\end{proof}

Since $f^c=f_0-f_v$, Formula~\eqref{mu-nu} also gives $\exp_*(f^c)$ in terms of $\exp_*(f)$. Indeed we have 

\begin{remark}
Let $f\in\So(\Omega)$. Then
$\exp_*(f^c)=\left(\exp_*(f)\right)^c$.
\end{remark}

When applied to $f$ and $f^c$ the above Proposition shows that, as it is natural to expect, the $*$-exponential of a regular function never vanishes. Indeed we have the following

\begin{prop}\label{real-part}
Let $f\in\So(\Omega)$. Then
\begin{align}
(\exp_*(f))^s&=\exp_*(2f_0) \label{eq1},\\
\frac{\exp_{*}(f)+\exp_{*}(f^{c})}{2}&=\exp_{*}(f_0)\mu(f),\label{eq2}\\
\frac{\exp_{*}(f)-\exp_{*}(f^{c})}{2}&=\exp_{*}(f_0)\nu(f)f_{v},\label{eq3}\\
\exp_{*}(f)*\exp_{*}(-f)&\equiv 1.\label{eq4}
\end{align}
In particular $\exp_*(f)$ is a never-vanishing function.
\end{prop}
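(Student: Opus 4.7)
The plan is to establish the four identities in sequence, using Proposition~\ref{lin-dep} and Formula~(\ref{mu-nu}), and then deduce the non-vanishing claim directly from (\ref{eq1}).

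First I would prove (\ref{eq4}): since $(-f)_v = -f_v$, the ``vector parts'' $f_v$ and $(-f)_v$ are trivially $\So_\R(\Omega)$-linearly dependent, so Proposition~\ref{lin-dep} gives
$$\exp_*(f) * \exp_*(-f) = \exp_*(f + (-f)) = \exp_*(0) \equiv 1.$$
Next, for (\ref{eq1}), the remark immediately preceding the statement says $(\exp_*(f))^c = \exp_*(f^c)$. Again $f_v$ and $(f^c)_v = -f_v$ are $\So_\R(\Omega)$-linearly dependent, so Proposition~\ref{lin-dep} applies to the pair $(f, f^c)$ and yields
$$(\exp_*(f))^s = \exp_*(f) * (\exp_*(f))^c = \exp_*(f) * \exp_*(f^c) = \exp_*(f + f^c) = \exp_*(2 f_0).$$

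For (\ref{eq2}) and (\ref{eq3}) I would apply Formula~(\ref{mu-nu}) to both $f$ and $f^c$. Since $(f^c)_0 = f_0$, $(f^c)_v = -f_v$, and $(-f_v)^s = (-f_v)^c * (-f_v) = f_v^c * f_v = f_v^s$, the functions $\mu$ and $\nu$ depend only on $f_v^s$ and so $\mu(f^c) = \mu(f)$, $\nu(f^c) = \nu(f)$. Consequently
$$\exp_*(f^c) = \exp_*(f_0)\bigl(\mu(f) - \nu(f) f_v\bigr).$$
Adding this to and subtracting it from $\exp_*(f) = \exp_*(f_0)\bigl(\mu(f) + \nu(f) f_v\bigr)$ and dividing by $2$ gives (\ref{eq2}) and (\ref{eq3}) at once, noting that $\exp_*(f_0) \in \So_\R(\Omega)$ commutes with every slice function under the $*$-product.

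Finally, for the non-vanishing of $\exp_*(f)$, observe that $2 f_0 \in \So_\R(\Omega)$ implies $\exp_*(2 f_0)$ coincides with the pointwise real exponential $\exp(2 f_0)$, which is strictly positive. By (\ref{eq1}), $(\exp_*(f))^s$ is never zero, and since any zero of a slice-regular function produces a zero of its symmetrized function (as recalled in the introduction), $\exp_*(f)$ itself has no zero. There is no serious obstacle here: the only step requiring a brief check is the invariance $\mu(f^c) = \mu(f)$, $\nu(f^c) = \nu(f)$, which reduces to the symmetry $(-f_v)^s = f_v^s$.
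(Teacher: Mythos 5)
Your proof is correct, but for \eqref{eq1} and \eqref{eq4} it takes a genuinely different route from the paper's. The paper derives \eqref{eq1} by expanding $(\exp_*(f))*(\exp_*(f^c))$ via Proposition~\ref{star-product-exponential} and then invoking the Pythagorean-type identity $\mu(f)^2+\nu(f)^2 f_v^s\equiv 1$; it handles \eqref{eq4} by the same kind of explicit computation. You instead observe that $f_v$ and $(f^c)_v=-f_v$ (respectively $(-f)_v$) are trivially linearly dependent over $\So_\R(\Omega)$, so Proposition~\ref{lin-dep} gives $\exp_*(f)*\exp_*(f^c)=\exp_*(f+f^c)=\exp_*(2f_0)$ and $\exp_*(f)*\exp_*(-f)=\exp_*(0)\equiv 1$ at once. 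This is shorter and avoids the identity $\mu^2+\nu^2 f_v^s\equiv 1$ altogether (there is no circularity, since Proposition~\ref{lin-dep} is established well before this point); what the paper's computation buys in exchange is that it exhibits that identity explicitly, the functional analogue of $\cos^2+\sin^2=1$. Your treatment of \eqref{eq2}, \eqref{eq3} (applying Formula~\eqref{mu-nu} to $f$ and $f^c$ and using $\mu(f^c)=\mu(f)$, $\nu(f^c)=\nu(f)$, which indeed reduces to $(-f_v)^s=f_v^s$) and of the never-vanishing claim coincides with the paper's. One small imprecision: $f_0$ is slice-preserving but not real-valued in general, so $\exp(2f_0)$ is not a ``strictly positive real'' function; what you actually need, and what is true, is that its restriction to each slice $\C_I$ is a complex exponential and hence never vanishes, so $(\exp_*(f))^s$ has no zeroes and neither does $\exp_*(f)$.
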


\begin{proof}
Indeed we have
\begin{align*}
(\exp_*(f))^s&=(\exp_*(f))*(\exp_*(f))^c=(\exp_*(f))*(\exp_*(f^c))\\
&=\exp_*(f_0)\exp_*(f_0)\left(\mu(f)\mu(f)+\nu(f)\nu(f)\langle f_v,f_v\rangle_*-\mu(f)\nu(f) f_v+\mu(f)\nu(f)f_v\right)\\&=
\exp_*(2f_0)\left(\mu(f)\mu(f)+\nu(f)\nu(f) f_v^s\right)=
\exp_*(2f_0)\left(\mu(f)^2+\nu(f)^2 f_v^s\right)
\end{align*}
Now using the same notation as in Corollary~\ref{sine-cosine} we have that $\mu(f)^2+\nu(f)^2 f_v^s\equiv1$ and therefore we obtain Equality~\eqref{eq1}.
As $\exp_*(2f_0)=\exp(2f_0)$ is never-vanishing, then also $\exp_*(f)$ has no zeroes on $\Omega$.
Equalities~\eqref{eq2} and~\eqref{eq3} follow immediately from~\eqref{mu-nu}, while~\eqref{eq4} can be obtained from Proposition~\ref{star-product-exponential} following 
the same kind of computations as above.
\end{proof}

Proposition~\ref{star-product-exponential} enables us to deepen our understanding of the comparison between the $*$-product of $\exp_*(f)$ and $\exp_*(g)$ and $\exp_*(f+g)$, thus giving, when $\Omega$ contains real points, a necessary and sufficient condition for them to be equal. We underline that, while Condition (i) appears quite natural, Condition (ii) is in some sense surprising: in particular it holds for a class of functions larger than one could initially presume (see  Example~\ref{non-constant}).

%

\begin{teo}\label{super-equality}
 Let $\Omega$ contain real points. Take $f,g\in\So(\Omega)$. 
If 
\begin{equation}\label{equality}
\exp_*(f+g)=
\exp_*(f)*\exp_*(g)
\end{equation}
then either 
\begin{enumerate}[(i)]
\item $f_v$ and $g_v$ are linearly dependent over $\So_\R(\Omega)$ or 
\item there exist $n,m,p\in\mathbb Z\setminus\{0\}$ such that $f_v^s\equiv n^2\pi^2$,  $g_v^s\equiv m^2\pi^2$, $2\langle f_v, g_v \rangle_*=(p^2 -n^2 -m^2)\pi^2$ and $n+m\equiv p$ mod 2.  
\end{enumerate}
Vice versa, if either $(i)$ or $(ii)$ are satisfied,
then~\eqref{equality} holds.
\end{teo}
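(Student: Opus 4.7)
The plan is to rewrite~\eqref{equality} via~\eqref{mu-nu} and Proposition~\ref{star-product-exponential}. Since $f_0,g_0\in\So_\R(\Omega)$ commute with everything and $\exp_*(f_0+g_0)=\exp_*(f_0)\exp_*(g_0)$ is never-vanishing by Proposition~\ref{real-part}, cancelling this common factor and splitting scalar from vector parts via Proposition~\ref{GMP} makes~\eqref{equality} equivalent to the pair
\begin{align*}
\mu(f+g)&=\mu(f)\mu(g)-\nu(f)\nu(g)\la f_v,g_v\ra_*,\\
\nu(f+g)(f_v+g_v)&=\nu(f)\nu(g)\,f_v\pv g_v+\mu(f)\nu(g)\,g_v+\mu(g)\nu(f)\,f_v.
\end{align*}

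For the forward direction I would split on whether $f_v\pv g_v\equiv 0$. In that case Proposition~\ref{wedge-product} produces condition~(i) at once. Otherwise, since $\Omega\cap\R\neq\emptyset$ and the zero set of a non-trivial slice regular function has empty interior, there exists $q_0\in\Omega\cap\R$ at which $(f_v\pv g_v)(q_0)=f_v(q_0)\times g_v(q_0)$ is nonzero; thus $f_v(q_0)$ and $g_v(q_0)$ are linearly independent in $\Im(\HH)$ and $f_v(q_0)\times g_v(q_0)$ is orthogonal to their span. The same inequality persists on an open neighbourhood $U$ of $q_0$ in $\Omega\cap\R$. At each $q\in U$, expanding the vector identity in the basis $\{f_v(q),g_v(q),f_v(q)\times g_v(q)\}$ and reading off the coefficient of $f_v(q)\times g_v(q)$ forces $\nu(f)\nu(g)(q)=0$. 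Since $\Omega\cap\C_I$ is a connected open subset of $\C_I$, the identity principle gives $\nu(f)\nu(g)\equiv 0$ on $\Omega$, and since holomorphic functions on a connected domain form an integral domain, either $\nu(f)\equiv 0$ or $\nu(g)\equiv 0$; say $\nu(f)\equiv 0$.

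The entire function $\phi(z)=\sum_{m\in\N}(-1)^m z^m/(2m+1)!$ vanishes exactly on $\{n^2\pi^2:n\in\Z\setminus\{0\}\}$, so the continuous map $f_v^s:\Omega\to\HH$ takes values in this discrete set; by connectedness of $\Omega$, $f_v^s\equiv n^2\pi^2$ for a unique $n\in\Z\setminus\{0\}$ and consequently $\mu(f)\equiv(-1)^n$. Substituting these in and reading off the coefficients of $f_v(q)$ and $g_v(q)$ at a point $q\in U$ gives $\nu(f+g)(q)=0$ and $\nu(g)(q)=0$; propagating as before yields $\nu(f+g)\equiv\nu(g)\equiv 0$, hence $g_v^s\equiv m^2\pi^2$ and $(f+g)_v^s\equiv p^2\pi^2$ for some $m,p\in\Z\setminus\{0\}$. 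The identity $(f+g)_v^s=f_v^s+g_v^s+2\la f_v,g_v\ra_*$ then produces $2\la f_v,g_v\ra_*\equiv(p^2-n^2-m^2)\pi^2$, while the scalar identity collapses to $(-1)^p=(-1)^{n+m}$, giving $n+m\equiv p\pmod 2$; this is~(ii).

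The converse is a direct verification: under~(i) it is Proposition~\ref{lin-dep}; under~(ii) the assumptions force $\nu(f)\equiv\nu(g)\equiv\nu(f+g)\equiv 0$, $\mu(f)\equiv(-1)^n$, $\mu(g)\equiv(-1)^m$, $\mu(f+g)\equiv(-1)^{n+m}$, so that inserting into~\eqref{mu-nu} and Proposition~\ref{star-product-exponential} shows that both sides of~\eqref{equality} equal $(-1)^{n+m}\exp_*(f_0)\exp_*(g_0)$. The main obstacle is the non-trivial case: propagating pointwise vanishings at real points where $f_v,g_v$ are independent to global identities on $\Omega$ via the identity principle, and extracting the discrete integer parameters $n,m,p$ from the requirement that continuous slice-preserving maps into discrete zero sets be constant.
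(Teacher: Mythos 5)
Your argument is correct, and its outer skeleton matches the paper's: the same reduction to $f_0\equiv g_0\equiv 0$ via Formula~\eqref{mu-nu} and Proposition~\ref{star-product-exponential}, the same separation of the identity into its scalar and vector components, the same extraction of $n,m,p$ from the vanishing of the $\nu$'s and of the parity condition from the scalar identity, and an identical converse. Where you genuinely diverge is in establishing the key dichotomy. The paper splits on whether $\nu(f)$ and $\nu(g)$ are both $\not\equiv 0$: in that case it observes that $\nu(f)\nu(g)\, f_v\pv g_v$ must lie in ${\rm Span}_{\So_\R(\Omega)}(f_v,g_v)$, pairs the resulting relation with $f_v$ and $g_v$ via $\langle\cdot,\cdot\rangle_*$, and invokes the Gram-type identity $(f_v\pv g_v)^s=f_v^sg_v^s-\langle f_v,g_v\rangle_*^2$ together with the integral-domain property of $\So(\Omega)$ to force $f_v\pv g_v\equiv 0$. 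You instead split on whether $f_v\pv g_v\equiv 0$ and, in the nontrivial case, evaluate at real points, where the slice-preserving coefficients become real scalars and $f_v$, $g_v$, $f_v\pv g_v$ become vectors of ${\rm Im}(\HH)\cong\R^3$ with $f_v\pv g_v$ the genuine cross product; reading off components in the pointwise basis $\{f_v(q),g_v(q),f_v(q)\times g_v(q)\}$ and propagating by the identity principle yields $\nu(f)\nu(g)\equiv 0$ and then, after the constancy of $f_v^s$, the remaining vanishings $\nu(g)\equiv\nu(f+g)\equiv 0$. Your route is more elementary and geometric, at the price of invoking the hypothesis $\Omega\cap\R\neq\emptyset$ repeatedly (to locate real points off the zero set of $f_v\pv g_v$, which is legitimate since that zero set meets $\R$ in a discrete set, and to propagate pointwise identities), whereas the paper confines that hypothesis to a single integral-domain step; both arguments are sound, complete, and of comparable length.
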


\begin{proof}
By Formula~\eqref{mu-nu}  and Proposition~\ref{star-product-exponential} 
we can suppose that $f_0$ and $g_0$ vanish identically. 
Then we have 
\begin{align*}
\exp_*(f+g)&=\mu(f+g)+\nu(f+g)(f+g)_v\\
\exp_*(f)*\exp_*(g)&=\mu(f)\mu(g)-\nu(f)\nu(g)\langle f_v,g_v\rangle_*\\
&\qquad\qquad\quad+\nu(f)\nu(g) f_v\pv g_v
+\mu(f)\nu(g) g_v+\mu(g)\nu(f)f_v.
\end{align*}

First we study the case when neither $\nu(f)$ nor $\nu(g)$ are identically zero.
Since $(f+g)_v=f_v+g_v$ belongs to ${\rm Span}_{\So_\R(\Omega)}(f_v,g_v)$,  if   $\exp_*(f+g)=\exp_*(f)*\exp_*(g)$ then 
$\nu(f)\nu(g) f_v\pv g_v$ belongs to ${\rm Span}_{\So_\R(\Omega)}(f_v,g_v)$ too. 
Then there exist $\alpha,\beta\in\So_\R(\Omega)$ such that 
$\nu(f)\nu(g)f_v\pv g_v=\alpha f_v+\beta g_v$. If $\alpha$ and $\beta$ are both identically zero,
then $\nu(f)\nu(g)f_v\pv g_v\equiv 0$.
Thus $f_v\pv g_v\equiv 0$ and Proposition~\ref{wedge-product} ends the proof. If $\alpha$ and $\beta$
are not both identically zero, up to rearrangement, we can suppose $\alpha\not\equiv 0$.
Now $0\equiv\langle\nu(f)\nu(g)f_v\pv g_v ,f_v\rangle_*=\alpha f_v^s+\beta \langle f_v,g_v\rangle_*$
and $0\equiv\langle \nu(f)\nu(g)f_v\pv g_v,g_v\rangle_*=\alpha \langle f_v,g_v\rangle_*+\beta g_v^s$ that is 
$$
\begin{cases}
\alpha f_v^s+\beta \langle f_v,g_v\rangle_*\equiv0,\\
\alpha \langle f_v,g_v\rangle_*+\beta g_v^s\equiv0.
\end{cases}
$$
Taking a suitable combination of these equations we obtain $\alpha\left(f_v^sg_v^s-\langle f_v, g_v\rangle_*^2\right)\equiv 0$. 
As $\alpha\in\So_{\R}(\Omega)\setminus\{0\}$, we obtain that $f_v^sg_v^s-\langle f_v, g_v\rangle_*^2=(f_v\pv g_v)^s$ is identically zero. Since $\Omega$ contains real points, then $\So(\Omega)$ is an integral domain, and therefore $f_v\pv g_v$ is identically zero. Again, Proposition~\ref{wedge-product} entails that $f_v$ and $g_v$ are linearly dependent over $\So_\R(\Omega)$.

If $\nu(f)\equiv0$ (the case $\nu(g)\equiv 0$ is treated analogously), then $f_v^s$ is identically equal to $n^2\pi^2$ for a suitable $n\in\Z\setminus\{0\}$. 
In this case $\exp_*(f+g)=
\exp_*(f)*\exp_*(g)
$ gives 
$$\mu(f+g)+\nu(f+g)(f+g)_v=\mu(f)\mu(g)+\mu(f)\nu(g) g_v$$ 
which, taking the equality of the vector parts, implies  
$$\nu(f+g)(f_v+g_v)=\mu(f)\nu(g) g_v,$$ 
that can be written as 
$$\left(\mu(f)\nu(g)-\nu(f+g)\right)g_v=\nu(f+g)f_v.$$
This gives the linear dependency of $f_v$ and $g_v$ over $\So_\R(\Omega)$
 unless  $\mu(f)\nu(g)-\nu(f+g)\equiv\nu(f+g)\equiv0$. In this last case, we obtain
$\mu(f)\nu(g)\equiv\nu(f+g)\equiv0$. A simple computation gives $\mu(f)=(-1)^n$ and therefore $\nu(g)\equiv\nu(f+g)\equiv0$. 
Then there exist $m,p\in\mathbb{Z}\setminus\{0\}$ such that $g_v^s\equiv m^2\pi^2$ and $(f_v+g_v)^s= p^2\pi^2$ and therefore $\mu(f+g)=(-1)^p$. 

Since $(f_v+g_v)^s=-(f_v+g_v)*(f_v+g_v)=n^2\pi^2+m^2\pi^2+2\langle f_v,g_v\rangle_*$ we obtain $2\langle f_v, g_v \rangle_*=(p^2 -m^2 -n^2)\pi^2$. 

Under these assumptions we have 
$$
\exp_*(f+g)=\mu(f+g)\equiv(-1)^p\quad\mbox{and}\quad
\exp_*(f)*\exp_*(g)=\mu(f)\mu(g)\equiv(-1)^n(-1)^m=(-1)^{n+m}
$$
which holds if and only if $n+m$ and $p$ have the same parity.

If $(i)$ is satisfied, then~\eqref{equality} holds because of Proposition~\ref{lin-dep}. If $(ii)$ is satisfied, then $\nu(f)=\nu(g)=\nu(f+g)\equiv0$ and $\mu(f)=(-1)^n$, $\mu(g)=(-1)^m$, $\mu(f+g)=(-1)^p$ so that 
$\exp_*(f+g)=\mu(f+g)=(-1)^p$ and  $\exp_*(f)*\exp_*(g)=(-1)^n(-1)^m=(-1)^{n+m}=(-1)^{p}$ because $n+m$ and $p$ have the same parity.
\end{proof}

The following two examples show cases where~\eqref{equality} holds while $f_v$ and $g_v$ are not linearly dependent over $\So_\R(\HH)$.  In particular the second one gives a large class of non constant functions which do not commute but satisfy~\eqref{equality}.

\begin{example}\label{constant_orthogonal}
If $I,J\in\SF$ are orthogonal, $f_v\equiv 3\pi I$ and $g_v\equiv 4\pi J$ then 
$$\exp_*(f+g)=-\exp_*(f_0+g_0)=\exp_*(f)*\exp_*(g).$$
\end{example}

\begin{example}\label{non-constant}
Let $n,p,m\in\N$ such that $m^2=n^2+p^2$.
Now choose $\alpha,\beta\in\So_\R(\Omega)$ and set 
\begin{align*}
f(q)&=-n\pi\left(\cos(\alpha(q))\cos(\beta(q)) i+\cos(\alpha(q))\sin(\beta(q)) j+\sin(\alpha(q))k\right),\\
g(q)&=\pi((n\cos(\alpha(q))\cos(\beta(q))+p\sin(\alpha(q))\cos(\beta(q))) i+\\
&+(n\cos(\alpha(q))\sin(\beta(q))+p\sin(\alpha(q))\sin(\beta(q))) j\\
&+(n\sin(\alpha(q))-p\cos(\alpha(q)))k).
\end{align*}
As $\alpha,\beta$ are slice-preserving functions then $f,g\in\So(\Omega)$, moreover 
$f_v^s=n^2\pi^2$, $g_v^s=m^2\pi^2$ and $\langle f_v, g_v\rangle_*=-n^2\pi^2$. Since $n,p,m$ is a Pythagorean triple then $2\langle f_v, g_v\rangle_*=-2n^2\pi^2=\pi^2(-n^2-p^2-n^2+p^2)=\pi^2(p^{2}-n^{2}-m^{2})$ and hence the given functions satisfy
the requirements of the statement because the parity condition is satisfied for any Pythagorean triple.
%
%
%
\end{example}

Next  example gives an explicit couple of functions $f$ and $g$ where the parity relation between $n,m$ and $p$ does not hold.

\begin{example}\label{constant_non_orthogonal}
If $I,J\in\SF$ satisfy $IJ+JI=-\frac{13}{20}$, $f_v\equiv 2\pi I$ and $g_v\equiv 5\pi J$ then 
$n=2$, $m=5$ and $p=4$ since
$((f+g)_v)^s=(2^2+5^2+2\cdot 2\cdot 5(IJ+JI))\pi^2=4^2\pi^2$. Then we have $\mu(f)=\mu(f+g)\equiv1$, $\mu(g)\equiv-1$ and $\nu(f)=\nu(g)=\nu(f+g)\equiv0$ which entail
$\exp_*(f+g)=\exp_*(f_0+g_0)$ while $\exp_*(f)*\exp_*(g)= -\exp_*(f_0+g_0)$.
\end{example}

The example below illustrates a case where $\Omega$ does not contain real points.

\begin{example}\label{constant_non_real}
Denote by $\tau:\HH\setminus \R\to\SF$ be given by $\tau(\alpha+\beta I)=\begin{cases} I &\text{if } \beta>0,\\ 
-I &\text{if } \beta<0. 
\end{cases}$
Consider $f,g:\HH\setminus \R\to\HH$ defined as 
\begin{align*}
f(\alpha+\beta I)&=2\pi i-2\pi(1+\tau(\alpha+\beta I)i)j=2\pi(i-j-\tau(\alpha+\beta I)k) \\
g(\alpha+\beta I)&=\tau(\alpha+\beta I)i+\pi j+k.
\end{align*}
Now we have $(f+g)(\alpha+\beta I)=(2\pi+\tau(\alpha+\beta I))i-\pi j+(-2\pi\tau(\alpha+\beta I)+1)k$, $f_v^s=4\pi^2$, $g_v^s=\pi^2$ and $((f+g)_v)^s=(2\pi+\tau(\alpha+\beta I))^2+\pi ^2+(-2\pi\tau(\alpha+\beta I)+1)^2=\pi^2$.

This entails $\mu(f)\equiv1$, $\mu(g)\equiv-1$, $\mu(f+g)\equiv-1$, $\nu(f)\equiv\nu(g)\equiv \nu(f+g)\equiv0$
and therefore
$\exp_*(f+g)\equiv-1\equiv\exp_*(f)*\exp_*(g)$ holds.
\end{example}



\bibliographystyle{amsplain}

\end{document}